\newcommand*{\shiftright}[2]{%
  \settowidth{\@tempdima}{#2}%
  \makebox[\@tempdima]{\hspace*{#1}#2}%
}
\newlength{\keyheight}
\title{\textbf{Intervals of permutation class growth rates 
}}
\author{$\phantom{{}^\dagger}$David Bevan${}^\dagger$}
\date{}
\begin{document}
\maketitle

{\let\thefootnote\relax\footnotetext
{${}^\dagger$Department of Mathematics and Statistics, The Open University, Milton Keynes, England.}}

{\let\thefootnote\relax\footnotetext
{2010 Mathematics Subject Classification:
05A05, 
05A16. 
}}

\begin{abstract}
\noindent
We prove that the set of growth rates of permutation classes
includes an infinite sequence of intervals whose
infimum
is $
\theta_B
\approx2.35526$,
and that it also
contains every value at least $\lambda_B\approx2.35698$.
These results improve on a theorem of Vatter, who determined that there are permutation classes of
every growth rate at least $\lambda_A\approx2.48187$. Thus, we also refute his conjecture that the set of growth rates
below $\lambda_A$ is nowhere dense.
Our proof
is based upon an analysis of expansions of real numbers in non-integer bases, 
the study of which was initiated by R\'enyi in the 1950s.
In particular,
we prove two generalisations of a result of Pedicini concerning %
expansions in which the digits are drawn from sets of allowed values. 
\end{abstract}

\section{Introduction}\label{sectLambdaPaperIntro}

We consider a permutation of length $n$ simply to be a sequence containing exactly one copy of each element of the set $\{1,\ldots,n\}$.
A permutation $\tau$ of length $k$ is said to be a \emph{subpermutation} of another permutation $\sigma$,
written $\tau\leqslant\sigma$, if $\sigma$ has a subsequence of length $k$ whose terms occur in the same relative order as those of $\tau$.
For example, $\mathbf{1324}$ is a subpermutation of $\mathbf{241635}$ since $\mathbf{2435}$ has the same relative order as $\mathbf{1324}$.

A \emph{permutation class} is a downset of permutations under this subpermutation 
order; thus if $\sigma$ is a member of a permutation class $\CCC$ and $\tau\leqslant\sigma$, then $\tau\in\CCC$.
If $\SSS$ is a set of permutations, we use $\SSS_n$ to denote the members of $\SSS$ of length $n$.
The \emph{growth rate} of a permutation class $\CCC$ is defined by the limit
$$
\gr(\CCC) \;=\;
\liminfty\sqrt[n]{|\CCC_n|},
$$
if it exists.
There are no known examples of permutation classes for which this limit does not exist 
and it is widely believed (\cite{Vatter2014} Conjecture~1.1) that every permutation class does indeed have a growth rate.
The proof by Marcus \& Tardos~\cite{MT2004} of the Stanley--Wilf conjecture implies that growth rates are finite except in the case of the class of all permutations.

What does the set of possible permutation class growth rates look like?
The last few years have seen substantial progress on answering this question, with particular focus on 
significant phase transition values.
Kaiser \& Klazar~\cite{KK2003} characterised all growth rates up to 2, showing that
the golden ratio $\varphi\approx1.61803$ (the unique positive root of $x^2-x-1$) is the least growth rate greater than 1.
Indeed, they prove a stronger statement, known as the \emph{Fibonacci dichotomy}, that if $|\CCC_n|$ is ever less
than the $n$th Fibonacci number, then $|\CCC_n|$ is eventually polynomial (see also~\cite{HV2006}).
Kaiser \& Klazar also determine that
2 is the least limit point in the set of growth rates.

Klazar~\cite{Klazar2004} considered the least growth rate admitting uncountably many permutation classes (which he denoted $\kappa$) and proved that $\kappa$ is at least 2 and is no greater than
approximately $2.33529$
(a root of a quintic). 
Vatter~\cite{Vatter2011} determined the exact value of $\kappa$ to be the unique real root of $x^3-2\+x^2-1$ (approximately $2.20557$) and completed the
characterisation of all growth rates up to $\kappa$, proving that there are uncountably many permutation classes with growth rate $\kappa$ and only countably many with growth rate less than $\kappa$.
The phase
transition at $\kappa$ also has enumerative ramifications: Albert, Ru\v{s}kuc \& Vatter~\cite{ARV2012} have shown that every permutation class with growth rate less than
$\kappa$ has a rational generating function.

Balogh, Bollob\'as \& Morris~\cite{BBM2006} extended Kaiser \& Klazar's work to the more
general setting of \emph{ordered graphs}.
An ordered graph is a graph with a linear order on its vertices.
To each permutation we associate an ordered graph.
The (ordered) \emph{graph} of a permutation $\sigma$ of length~$n$ has
vertex set $\{1,\ldots,n\}$
with an edge between vertices $i$ and $j$ 
if $i<j$ and $\sigma(i)>\sigma(j)$.
See the figures below for illustrations.
We use $G_\sigma$ to denote the graph of $\sigma$.
An \emph{induced ordered subgraph} of an ordered graph $G$ is an induced subgraph of $G$ that inherits
its vertex ordering.
A set of ordered graphs closed under taking induced ordered subgraphs is known as a \emph{hereditary class}.
It is easy to see that
$\tau\leqslant\sigma$
if and only if
$G_\tau$ is an induced ordered subgraph of $G_\sigma$.
Thus each permutation class is isomorphic to a hereditary class of ordered graphs, and results can be transferred between the two domains.

Balogh, Bollob\'as \& Morris conjectured that the set of growth
rates of hereditary classes of ordered graphs contains no limit points from above, and also that all
such growth rates are integers or algebraic irrationals.
These conjectures were
disproved by Albert \& Linton~\cite{AL2009} who exhibited an uncountable \emph{perfect set} (a closed set every member of which is a limit point) of growth rates of permutation classes and in turn conjectured that the set of growth rates includes some interval $(\lambda,\infty)$.\footnote{Commenting on this, Klazar~\cite{Klazar2010} states, ``It
seems that the refuted conjectures should have been phrased for finitely
based downsets.''.}

Their conjecture was established by Vatter~\cite{Vatter2010b} who proved that there are permutation classes of every growth rate at least $\lambda_A\approx2.48187$ (the unique real root of $x^5-2\+x^4-2\+x^2-2\+x-1$). In addition, he conjectured 
that this value was optimal, the set of growth rates below $\lambda_A$ being nowhere dense.
By generalising Vatter's constructions, we now refute his conjecture by proving the following two results:

\thmbox{
\begin{thmO}\label{thmTheta}
Let $\theta_B\approx2.35526$ be the unique real root of $x^7-2\+x^6-x^4-x^3-2\+x^2-2\+x-1$.
For any $\varepsilon>0$, there exist $\delta_1$ and $\delta_2$ with $0<\delta_1<\delta_2<\varepsilon$ such that every value in the interval $[\theta_B+\delta_1,\theta_B+\delta_2]$ is the growth rate of a permutation class.
\end{thmO}
}

\thmbox{
\begin{thmO}\label{thmLambda}
Let $\lambda_B\approx2.35698$ be the unique positive root of $x^8-2\+x^7-x^5-x^4-2\+x^3-2\+x^2-x-1$.
Every value at least $\lambda_B$ is the growth rate of a permutation class.
\end{thmO}
}
\vspace{6pt}

\newcommand{\grset}{{\boldsymbol\Gamma}}
In order to complete the characterisation of all permutation class growth rates, further investigation is required of the values between $\kappa$ and $\lambda_B$.
This interval contains the first limit point from above (which we denote $\xi$) and the first perfect set (whose infimum we denote $\eta$), as well as the first interval (whose infimum we denote $\theta$).
In~\cite{Vatter2010b},
Vatter showed that $\xi_A\approx2.30522$ (the unique real root of $x^5-2\+x^4-x^2-x-1$) is the infimum of a perfect set in the set of growth rates and hence a limit point from above, and conjectured that this is the least such limit point (in which case $\eta=\xi$).
If $\grset$ denotes the set of all permutation class growth rates, then the current state of our knowledge can be summarised by the following table:
\begin{center}
\renewcommand{\arraystretch}{1.25}
\begin{tabular}{|c|l|r|}
  \hline
  $\kappa$  & $\inf\,\{\+\gamma:|\grset\cap(1,\gamma)|>\aleph_0\+\}$ & $\kappa\approx2.20557$ \\ \hline
  $\xi$     & $\inf\,\{\+\gamma:\gamma\text{~is a limit point from above in~}\grset\+\}$ & $\xi\leqslant\xi_A\approx2.30522$ \\ \hline
  $\eta$    & $\inf\,\{\+\gamma:\gamma\in P, P\subset\grset, P\text{~is~a~perfect~set}\+\}$ & $\eta\leqslant\xi_A\approx2.30522$ \\ \hline
  $\theta$  & $\inf\,\{\+\gamma:(\gamma,\delta)\subset\grset\+\}$ & $\theta\leqslant\theta_B\approx2.35526$ \\ \hline
  $\lambda$ & $\inf\,\{\+\gamma:(\gamma,\infty)\subset\grset\+\}$ & $\lambda\leqslant\lambda_B\approx2.35698$ \\
  \hline
\end{tabular}
\end{center}
Further research is required to determine whether the upper bounds on $\xi$, $\eta$, $\theta$ and $\lambda$ are, in fact, equalities.
Currently, the only proven lower bound we have for any of these is $\kappa$.

Clearly, there are analogous phase transition values in the set of growth rates of hereditary classes of ordered graphs, and the permutation class upper bounds also bound the corresponding ordered graph values from above.
The techniques used in this paper 
could readily be applied directly to ordered graphs to yield smaller upper bounds
to the ordered graph phase transitions.
We leave such a study for the future.

\vspace{9pt}
In the next section, we investigate expansions of real numbers in non-integer bases.
In~\cite{Pedicini2005}, Pedi\-cini considered the case in which the digits are drawn from some allowable digit set
and determined the conditions on the base under which every real number in an interval can be expressed by such an expansion.
We prove two generalisations of Pedicini's result, firstly permitting each
digit to be drawn from a different allowable digit set, and secondly allowing the use of what we call \emph{generalised digits}.

In Section~\ref{sectSumClosed}, we consider the growth rates of \emph{sum-closed} permutation classes, whose members are constructed from a sequence of \emph{indecomposable} permutations.
Building on our results in the previous section concerning expansions of real numbers in non-integer bases, we
determine sufficient conditions for the set of growth rates of a family
of sum-closed permutation classes to include an interval of the real line.
These conditions relate to the enumeration of the indecomposable permutations in the classes. 

In the final section, we prove our two theorems by constructing families of permutation classes whose indecomposable permutations
are enumerated by sequences satisfying the required conditions.
The key permutations in our constructions are formed
from permutations known as \emph{increasing oscillations} by \emph{inflating} their ends.
Our constructions are similar to, but considerably more general than, those used by Vatter to prove that the set of permutation class growth rates includes the interval $[\lambda_A,\infty)$.
The proofs depend on some tedious calculations, performed using \emph{Mathematica}~\cite{Mathematica}. The details of these calculations are omitted from the proofs, but they can be found in~\cite{BevanLambdaCalcs}, which is a copy of the relevant
\emph{Mathematica} notebook.

\section{Non-integer bases and generalised digits}

Suppose we want a way of representing any value in an interval of the real line.
As is well known, infinite sequences of small non-negative integers suffice.
Given some integer $\beta>1$ and
any number
$x\in[0,1]$
there is some sequence $(a_n)$ where each
$a_n\in\{0,1,\ldots,\beta-1\}$
such that
$$
x \;=\;
\sum\limits_{n=1}^{\infty}a_n \+ \beta^{-n}
.
$$
This is simply the familiar ``base $\beta$'' expansion 
of $x$,
normally written $x=0.a_1a_2a_3\ldots$,
the $a_n$ being ``$\beta$-ary digits''.
For simplicity, we use $(a_n)_\beta$ to denote the sum above. 
For example,
if $a_n=1$ for each $n\geqslant1$
then $(a_n)_3=\thalf$.

In a seminal paper, R\'enyi~\cite{Renyi1957} generalized these expansions to arbitrary real
bases $\beta > 1$, and since then many papers have been devoted to the various connections with other branches of mathematics,
such as measure theory and dynamical systems. For further information, see the recent survey of Komornik~\cite{Komornik2011}.
Of particular relevance to us, Pedicini~\cite{Pedicini2005} explored the further generalisation in which the digits are drawn from some allowable digit set
$A = \{a_0,\ldots,a_m\}$,
with $a_0 < \ldots < a_m$.
He proved that every point in the interval
$\big[\frac{a_0}{\beta-1},\frac{a_m}{\beta-1}\big]$
has an expansion of the above form
with $a_n\in A$ for all $n\geqslant1$ if and only if
$
\max\limits_{1\leqslant j\leqslant m}(a_j-a_{j-1})
\leqslant
\frac{a_m-a_0}{\beta-1}
.
$

We establish two additional generalisations of Pedicini's result.
Firstly, we permit each digit to be drawn from a different
allowable digit set.
Specifically, 
for each~$n$
we allow the $n^{\text{th}}$
digit $a_n$ to be drawn from some finite set $D_n$ of permitted values.
For instance, we could have $D_n=\{1,4\}$ for odd $n$ and $D_n=\{1,3,5,7,9\}$ for even $n$.
We return to this example below.

Let $(D_n)_\beta=\{(a_n)_\beta:a_n\in D_n\}$ be the set of values for which there is an expansion in base~$\beta$.
The following lemma, generalising the result of Pedicini, gives sufficient conditions on $\beta$ for $(D_n)_\beta$ to be an interval of the real line.

\begin{lemma}\label{lemmaBaseNotation1}
Given a sequence $(D_n)$ of non-empty finite sets of non-negative real numbers, let $\ell_n$ and $u_n$
denote the least and greatest (`lower' and `upper') elements of $D_n$ respectively,
and let $\Delta_n=u_n-\ell_n$.
Also,
let
$\delta_n$
be the largest gap between consecutive elements of $D_n$ (with $\delta_n=0$ if $|D_n|=1$).

If $(u_n)$ is bounded, 
$\beta>1$,
and
for each $n$, $\beta$ satisfies the inequality
$$
\delta_n
 \;\leqslant\;
\sum\limits_{i=1}^{\infty}\Delta_{n+i}\+\beta^{-i},
$$
then
$(D_n)_\beta=\big[(\ell_n)_\beta,\,(u_n)_\beta\big]$.
\end{lemma}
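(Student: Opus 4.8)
The plan is to show that $(D_n)_\beta$ is exactly the interval $\big[(\ell_n)_\beta,\,(u_n)_\beta\big]$ by establishing the two inclusions separately. The inclusion $(D_n)_\beta\subseteq\big[(\ell_n)_\beta,\,(u_n)_\beta\big]$ is immediate: if $a_n\in D_n$ for all $n$, then $\ell_n\leqslant a_n\leqslant u_n$ term by term, and since all $\beta^{-n}>0$, summing gives $(\ell_n)_\beta\leqslant(a_n)_\beta\leqslant(u_n)_\beta$; convergence of all three series is guaranteed because $(u_n)$ is bounded and $\beta>1$. The content of the lemma is the reverse inclusion, and for this I would use a greedy algorithm, exactly as in R\'enyi's and Pedicini's arguments, but adapted to the varying digit sets.

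So fix a target value $x\in\big[(\ell_n)_\beta,\,(u_n)_\beta\big]$. I would construct the digits $a_1,a_2,\ldots$ inductively so as to maintain the invariant
$$
\big(\ell_{n+k}\big)_{k\geqslant1}\ \text{scaled}\ \leqslant\ \beta^{n}\Big(x-\sum_{i=1}^{n}a_i\beta^{-i}\Big)\ \leqslant\ \big(u_{n+k}\big)_{k\geqslant1}\ \text{scaled},
$$
more precisely the invariant $\sum_{i\geqslant1}\ell_{n+i}\beta^{-i}\ \leqslant\ r_n\ \leqslant\ \sum_{i\geqslant1}u_{n+i}\beta^{-i}$, where $r_n=\beta^{n}\big(x-\sum_{i=1}^{n}a_i\beta^{-i}\big)$ is the ``rescaled remainder'' after $n$ digits. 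The invariant holds at $n=0$ precisely by the assumption $x\in\big[(\ell_n)_\beta,\,(u_n)_\beta\big]$. Given that it holds at stage $n-1$, one has $r_{n-1}\in\big[\ell_n\beta^{-1}+\cdots,\ u_n\beta^{-1}+\cdots\big]$; I choose $a_n\in D_n$ to be the \emph{largest} element of $D_n$ with $a_n\beta^{-1}\leqslant r_{n-1}$ minus the minimal tail, i.e.\ the greedy choice that keeps $r_n=\beta r_{n-1}-a_n$ from overshooting. Then $r_n\geqslant\sum_{i\geqslant1}\ell_{n+i}\beta^{-i}$ follows because we did not take $a_n$ too large, and $r_n\leqslant\sum_{i\geqslant1}u_{n+i}\beta^{-i}$ is where the hypothesis enters: if $a_n$ is not the top element $u_n$, then the next-larger candidate $a_n'$ in $D_n$ satisfies $a_n'-a_n\leqslant\delta_n$, and the greedy choice forces $\beta r_{n-1}-a_n < (a_n'-a_n)+\sum_{i\geqslant1}\ell_{n+i}\beta^{-i}\leqslant \delta_n+\sum_{i\geqslant1}\ell_{n+i}\beta^{-i}$; combined with the displayed hypothesis $\delta_n\leqslant\sum_{i\geqslant1}\Delta_{n+i}\beta^{-i}$ and $\Delta_{n+i}=u_{n+i}-\ell_{n+i}$, this yields $r_n\leqslant\sum_{i\geqslant1}u_{n+i}\beta^{-i}$. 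The case $a_n=u_n$ is easy since then $r_n=\beta r_{n-1}-u_n\leqslant u_n\beta^{-1}+\cdots$ scaled appropriately — one checks it directly from the stage-$(n-1)$ upper bound.

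Finally I would show the constructed expansion actually converges to $x$: since the invariant gives $\big|r_n\big|$ bounded (both bounding tails tend to $0$ because $(u_n)$, hence $(\ell_n)$ and $(\Delta_n)$, is bounded and $\beta>1$, so the geometric-type tails $\sum_{i\geqslant1}u_{n+i}\beta^{-i}$ are $O(\beta^{-1})$ uniformly and in fact $\to$ a bounded quantity; what matters is that $r_n$ stays bounded), we get $x-\sum_{i=1}^{n}a_i\beta^{-i}=r_n\beta^{-n}\to0$. Hence $x=(a_n)_\beta$ with $a_n\in D_n$, i.e.\ $x\in(D_n)_\beta$, completing the reverse inclusion.

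The main obstacle I anticipate is purely bookkeeping: because the digit sets $D_n$ vary with $n$, the ``gap'' bound $\delta_n$ and the ``spread'' bound $\Delta_{n+i}$ live at different indices, so the inequality that makes greedy work must be applied at exactly the right shifted index, and one must be careful that the greedy choice of $a_n$ is well-defined — i.e.\ that the set of $a\in D_n$ satisfying the lower constraint is non-empty, which is where the \emph{previous} stage's invariant is used. The boundedness hypothesis on $(u_n)$ is needed only to guarantee convergence of all the infinite series in sight and that the remainders $r_n$ do not blow up; I would isolate that as a preliminary observation before starting the induction.
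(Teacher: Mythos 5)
Your proposal is correct and uses the same greedy digit-selection idea as the paper: choose each $a_n$ maximal subject to not overshooting, and invoke the gap inequality $\delta_n\leqslant\sum_i\Delta_{n+i}\beta^{-i}$ to control the error precisely when the greedy digit falls short of $u_n$. The only structural differences are that you maintain a two-sided inductive invariant on the rescaled remainder $r_n$ (which neatly sidesteps the paper's post-hoc case split between ``$a_n<u_n$ infinitely often'' and ``$a_n=u_n$ eventually'') and you keep $\ell_n$ explicit, whereas the paper first shifts to reduce to $\ell_n=0$; both are minor bookkeeping variants of the same argument.
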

\begin{proof}
If we let $A'_n=\{a-\ell_n:a\in D_n\}$ for each $n$, then the set of values that can be represented by digits from the $D_n$ is
the same as the set of values representable using digits from the $A'_n$ shifted 
by $(\ell_n)_\beta$:
$$
(D_n)_\beta \;=\; \{(a_n)_\beta:a_n\in D_n\} \;=\; \{(a'_n)_\beta+(\ell_n)_\beta:a'_n\in A'_n\}.
$$
So we need only consider cases in which each $\ell_n$ is zero, whence we have $\Delta_n=u_n$ for all $n$.

Given some $x$ in the specified interval,
we choose each digit maximally such that no partial sum exceeds $x$,
i.e. for each $n$, we select $a_n$ to be the greatest element of $D_n$ such that
$$
S(n) \;=\;
\sum\limits_{i=1}^{n}a_i\+ \beta^{-i} \;\leqslant\; x.
$$
It follows that
$(a_n)_\beta
=\liminfty S(n)
\leqslant x
$.
We claim that $(a_n)_\beta=x$.

If, for some $n$, $a_n<u_n$ then there is some element of $D_n$ greater than $a_n$ but no larger than $a_n+\delta_n$, so by our choice of the $a_n$, $S(n)+\delta_n\+\beta^{-n}>x$.
Hence, $|x-S(n)|<\delta_n\+\beta^{-n}$.
Thus, if $a_n<u_n$ for infinitely many values of $n$, $(a_n)_\beta=\liminfty S(n)=x$, since the $\delta_n$ are bounded and $\beta>1$.

Suppose now that $a_n=u_n$ for all but finitely many $n$.
If, in fact, $a_n=u_n$ for all $n$, then we have $x=(u_n)_\beta=(a_n)_\beta$ as required.
Indeed, this is the only possibility given the way we have chosen the $a_n$.\footnote{For example, our approach would select $0.50000\ldots$ rather than $0.49999\ldots$ as the decimal representation of $\half$, but $0.99999\ldots$ as the only possible representation for $1$.}
Let us assume that $N$ is the greatest index for which $a_N<u_N$, so that $S(N)+\delta_N\+\beta^{-N}>x$.

Now, by the constraints on $\beta$ and the fact that $\Delta_i=u_i$ for all $i$, we have
$
\delta_N\+ \beta^{-N}
\leqslant
\sum\limits_{i=N+1}^{\infty}u_i\+\beta^{-i},
$
and thus
$$
(a_n)_\beta \;=\; S(N) \:+\: \sum\limits_{i=N+1}^{\infty}u_i\+\beta^{-i} \;>\;x,
$$
which produces a contradiction.
\end{proof}
We refer to the conditions relating $\beta$ to the values of the $\delta_n$ and $\Delta_n$ as the \emph{gap inequalities}. These reoccur in subsequent lemmas.

Let us return to our example. If $D_n=\{1,4\}$ for odd $n$ and $D_n=\{1,3,5,7,9\}$ for even $n$, then to produce an interval of values it is sufficient for $\beta$ to satisfy the 
two gap inequalities:
$$
\begin{array}{rcl}
3 & \leqslant & 8\+\beta^{-1} + 3\+\beta^{-2} + 8\+\beta^{-3} + 3\+\beta^{-4} + \ldots \;=\; \dfrac{8\+\beta+3}{\beta^2-1},
\qquad\text{so~~} \beta\:\leqslant\: \frac{1}{3}\+(4+\sqrt{34}) \:\approx\: 3.27698 , \\[12pt]
2 & \leqslant & 3\+\beta^{-1} + 8\+\beta^{-2} + 3\+\beta^{-3} + 8\+\beta^{-4} + \ldots \;=\; \dfrac{3\+\beta+8}{\beta^2-1},
\qquad\text{so~~} \beta\:\leqslant\: \frac{1}{4}\+(3+\sqrt{89}) \:\approx\: 3.10850 .
\end{array}
$$
Thus, if $1<\beta\leqslant\frac{1}{4}\+(3+\sqrt{89})$ then $(D_n)_\beta$ 
is an interval.
For instance, with $\beta=3$, we have $(D_n)_3=\big[\frac{1}{2},\frac{21}{8}\big]$.

We now generalise our concept of a digit 
by permitting members of the $D_n$ to have the form $c_0.c_1\ldots c_k$ for non-negative integers $c_i$, where this is to be interpreted to mean
$$
c_0.c_1\ldots c_k \;=\; \sum\limits_{i=0}^{k}c_i\+ \beta^{-i} 
$$
as would be expected.
We call $c_0.c_1\ldots c_k$ a \emph{generalised digit} or a \emph{generalised $\beta$-digit} of \emph{length} $k+1$ with \emph{subdigits} $c_0,c_1,\ldots,c_k$.
Note that the value of a generalised $\beta$-digit depends on the value of~$\beta$.

As an example of sets of generalised digits,
consider
$D_n=\{1.1,1.11,1.12,1.2,1.21,1.22\}$ for odd~$n$ and $D_n=\{0\}$ for even $n$.
This is similar to what we use below to create intervals of permutation class growth rates. We return to this example below.

As long as the sets of generalised digits are suitably bounded, the same gap constraints on $\beta$ suffice as before to ensure that $(D_n)_\beta$ 
is an interval. Thus we have the following generalisation of Lemma~\ref{lemmaBaseNotation1}.

\begin{lemma}\label{lemmaBaseNotation2}
Given a sequence $(D_n)$ of non-empty finite sets of generalised $\beta$-digits, for a fixed $\beta$ let $\ell_n$ and $u_n$
denote the least and greatest elements of $D_n$ respectively,
and let $\Delta_n=u_n-\ell_n$.
Also,
let
$\delta_n$
be the largest gap between consecutive elements of $D_n$ (with $\delta_n=0$ if $|D_n|=1$).

If both the length and the subdigits of all the generalised digits in the $D_n$ are
bounded, 
$\beta>1$,
and
for each~$n$, $\beta$~satisfies the inequality
$$
\delta_n
 \;\leqslant\;
\sum\limits_{i=1}^{\infty}\Delta_{n+i}\+\beta^{-i},
$$
then
$(D_n)_\beta=\big[(\ell_n)_\beta,\,(u_n)_\beta\big]$.
\end{lemma}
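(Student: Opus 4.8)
The plan is to reduce Lemma~\ref{lemmaBaseNotation2} to Lemma~\ref{lemmaBaseNotation1} by replacing each set $D_n$ of generalised $\beta$-digits with a sequence of ordinary digit sets, so that the base-$\beta$ expansion using the new sets reproduces exactly the values $(a_n)_\beta$ with $a_n\in D_n$. The key observation is that a generalised $\beta$-digit $c_0.c_1\ldots c_k$ occurring in position $n$ contributes $\sum_{i=0}^k c_i\+\beta^{-(n+i)}$ to the sum, i.e. its subdigit $c_i$ behaves exactly like an ordinary digit in position $n+i$. So if we fix a uniform bound $K$ on the length of all generalised digits appearing in any $D_n$ (such a bound exists by hypothesis), we can ``spread out'' each $D_n$ over $K$ consecutive positions.

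Concretely, I would introduce a refined index set, say positions labelled by pairs $(n,j)$ with $0\le j\le K-1$, ordered lexicographically, and define $D'_{(n,j)}$ to be the set of all $j^{\text{th}}$ subdigits $c_j$ (padding shorter generalised digits with trailing zero subdigits so every digit has length exactly $K$) that can occur \emph{jointly} — but here one must be careful: the subdigits of a single generalised digit are not independent, so the naive product construction is wrong. The cleaner route is different. Rather than literally invoking Lemma~\ref{lemmaBaseNotation1} as a black box, I would re-run its proof verbatim with the greedy digit-selection argument, checking that the three places where the proof uses the structure of ordinary digit sets still go through for generalised digits: (i) the reduction to $\ell_n=0$ via the shift $A'_n=\{a-\ell_n:a\in D_n\}$ — this is fine since $(a_n)_\beta$ is additive and $\Delta_n=u_n$ still holds; (ii) the estimate $|x-S(n)|<\delta_n\+\beta^{-n}$ whenever $a_n<u_n$, together with $\liminfty S(n)=x$ when this happens infinitely often — this needs the $\delta_n$ to be bounded, which follows from the uniform bounds on lengths and subdigits; and (iii) the final contradiction, where the gap inequality $\delta_N\+\beta^{-N}\le\sum_{i>N}u_i\+\beta^{-i}$ forces $(a_n)_\beta>x$ once $a_n=u_n$ for all $n>N$.

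The only genuinely new point is step (ii): I must verify that $(a_n)_\beta=\liminfty S(n)$, where now $S(n)=\sum_{i=1}^n a_i\+\beta^{-i}$ but each $a_i$ is itself a finite sum $\sum_j c_{i,j}\+\beta^{-j}$. Because all generalised digits have length $\le K$ and bounded subdigits, the tail $\sum_{i>n} a_i\+\beta^{-i}$ is bounded by a geometric-type series $C\sum_{i>n}\beta^{-i}\to 0$, so the partial sums converge and the limit equals $(a_n)_\beta$ as defined. Similarly, ``the greatest element of $D_n$ such that $S(n)\le x$'' is well-defined because $D_n$ is finite and totally ordered by value (generalised digits are real numbers once $\beta$ is fixed), so the greedy choice makes sense and the whole argument of Lemma~\ref{lemmaBaseNotation1} applies \emph{mutatis mutandis}.

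The main obstacle — and it is a mild one — is bookkeeping: one must state carefully that ``bounded length and bounded subdigits'' is exactly what replaces the hypothesis ``$(u_n)$ bounded'' of Lemma~\ref{lemmaBaseNotation1}, and confirm that these bounds are uniform in $n$ so that the $\delta_n$ and $u_n$ are bounded, which is all the convergence arguments require. Once that is observed, the proof is essentially a one-line appeal: ``The proof is identical to that of Lemma~\ref{lemmaBaseNotation1}, the hypotheses on lengths and subdigits ensuring that the sequences $(\delta_n)$ and $(u_n)$ are bounded, which is all that proof uses.'' I would write it out at roughly that level of detail, perhaps spelling out the tail estimate in step (ii) for completeness.
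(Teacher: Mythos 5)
Your final argument matches the paper's approach exactly: the paper itself simply states that the proof is ``essentially the same as that for Lemma~\ref{lemmaBaseNotation1},'' and your re-run of the greedy argument—checking that the shift to $\ell_n=0$, the tail estimate $|x-S(n)|<\delta_n\beta^{-n}$, and the final contradiction all survive once the length/subdigit bounds supply boundedness of $(u_n)$ and $(\delta_n)$—is precisely what that remark is compressing. You were also right to discard the initial ``spread each $D_n$ over $K$ positions'' reduction, since the subdigits of one generalised digit are coupled and the naive product of subdigit sets would enlarge $D_n$.
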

We omit the proof since it is essentially the same as that for
Lemma~\ref{lemmaBaseNotation1}.

For our constructions, we only require the following special case.
\begin{cor}\label{corGapIneqs}
Let $k\geqslant3$ be odd. Suppose that for all odd $n>k$, we have $D_n=D_k$, and for all other $n>1$, we have $D_n=\{0\}$.
Then the gap inequalities for the sequence $(D_n)$ are
$$
(\beta^2-1)\+\delta_k \;\leqslant\; \Delta_k
\qquad
\text{and}
\qquad
(\beta^{k-1}-\beta^{k-3})\+\delta_1 \;\leqslant\; \Delta_k
.
$$
\end{cor}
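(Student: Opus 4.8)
The plan is to specialise the family of gap inequalities provided by Lemma~\ref{lemmaBaseNotation2} — namely $\delta_n\leqslant\sum_{i=1}^{\infty}\Delta_{n+i}\+\beta^{-i}$, one for each $n\geqslant1$ — and to check that, under the stated hypotheses, every one of these is either vacuous or coincides with one of the two displayed inequalities. (Only finitely many distinct sets occur among the $D_n$, so the boundedness conditions in Lemma~\ref{lemmaBaseNotation2} hold automatically and the lemma applies with the standing assumption $\beta>1$; the substance of the corollary is the reduction of the inequality system.) First I would record the data produced by the hypotheses: $D_n=\{0\}$, hence $\delta_n=\Delta_n=0$, whenever $n$ is even or $n$ is odd with $1<n<k$; $\delta_n=\delta_k$ and $\Delta_n=\Delta_k$ whenever $n=k$ or $n$ is odd with $n>k$; and $D_1$ is a fixed finite set of generalised digits with largest gap $\delta_1$. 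In particular every index $n$ with $\delta_n=0$ contributes an inequality $0\leqslant\sum_{i\geqslant1}\Delta_{n+i}\+\beta^{-i}$, which holds trivially since each $\Delta_m\geqslant0$; so only $n=1$ and the odd indices $n\geqslant k$ can impose a real constraint on $\beta$.

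For an odd index $n\geqslant k$, the index $n+i$ is odd precisely when $i$ is even, and then $n+i>k$ is odd so $\Delta_{n+i}=\Delta_k$, while for odd $i$ the index $n+i$ is even and $\Delta_{n+i}=0$; summing the resulting geometric series gives $\sum_{i\geqslant1}\Delta_{n+i}\+\beta^{-i}=\Delta_k\sum_{j\geqslant1}\beta^{-2j}=\Delta_k/(\beta^2-1)$, so, as $\delta_n=\delta_k$, the inequality becomes $(\beta^2-1)\+\delta_k\leqslant\Delta_k$, independently of which such $n$ is taken. For $n=1$, again $1+i$ is odd exactly when $i$ is even; the indices strictly between $1$ and $k$, and all even indices, contribute $0$, so the nonzero terms are those with $1+i$ odd and $\geqslant k$, that is $i\in\{k-1,k+1,k+3,\dots\}$ (note $k-1$ is even because $k$ is odd), each contributing $\Delta_k$. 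Hence $\sum_{i\geqslant1}\Delta_{1+i}\+\beta^{-i}=\Delta_k\+\beta^{-(k-1)}\bigl(1-\beta^{-2}\bigr)^{-1}=\Delta_k\+\beta^{3-k}/(\beta^2-1)$, and multiplying the inequality $\delta_1\leqslant\Delta_k\+\beta^{3-k}/(\beta^2-1)$ through by the positive quantity $(\beta^2-1)\+\beta^{k-3}=\beta^{k-1}-\beta^{k-3}$ yields exactly $(\beta^{k-1}-\beta^{k-3})\+\delta_1\leqslant\Delta_k$.

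Since every gap inequality of the sequence $(D_n)$ is thus either automatically satisfied or equal to one of these two, the two displayed inequalities are precisely the gap inequalities, which is the assertion. There is no genuine obstacle here; the only point needing care is the parity bookkeeping — tracking which shifted indices $n+i$ are odd and in which of the ranges where $D$ equals $\{0\}$, $D_1$, or $D_k$ they fall, and in particular recognising $k-1$ as the smallest shift from index $1$ that lands on index $k$.
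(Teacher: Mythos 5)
Your proof is correct: the paper states the corollary without proof, and your derivation — specialising the infinite family of gap inequalities from Lemma~\ref{lemmaBaseNotation2}, observing that all indices $n$ with $D_n=\{0\}$ give vacuous inequalities, and summing the geometric series for $n=1$ and for odd $n\geqslant k$ — is exactly the intended argument, with the parity and range bookkeeping handled correctly (in particular that $k-1$ is the smallest even shift from $1$ reaching an odd index $\geqslant k$).
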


Let us analyse our generalised digit example.
If $D_n=\{1.1,1.11,1.12,1.2,1.21,1.22\}$ for odd~$n$ and $D_n=\{0\}$ for even $n$,
then for odd $n$ we have $\Delta_n=0.12
=\beta^{-1}+2\+\beta^{-2}$.
However, the determination of $\delta_n$ depends on whether $1.2$ exceeds $1.12$ or not.
If it does (which is the case when $\beta$ exceeds~2), then $\delta_n$ is the greater of
$\beta^{-2}$
and
$\beta^{-1}-2\+\beta^{-2}$.
If $\beta<2$,
then $\delta_n$
is the greatest of $\beta^{-2}$, $\beta^{-1}-\beta^{-2}$ and $-\beta^{-1}+2\+\beta^{-2}$.
Careful solving of all the resulting inequalities reveals that
if $1<\beta\leqslant\half\+(1+\sqrt{13})\approx2.30278$,
then $(D_n)_\beta$ 
is an interval.\footnote
{When the $D_n$ contain generalised digits,
the solution set need not consist of a single interval.
For example, if
$D_n=\{0.5,0.501,0.502,0.51,0.511,0.512,0.52,0.521,1.3\}$ for odd~$n$
and
$D_n=\{0\}$ for even~$n$, then
$(D_n)_\beta$ 
is an interval if $1<\beta\leqslant2.732...$ and also if $2.879...\leqslant\beta\leqslant2.923...$. 
}

We are now ready to begin relating this to the construction of permutation classes.

\section{Sum-closed permutation classes}\label{sectSumClosed}

Given two permutations $\sigma$ and $\tau$ with lengths $k$ and $\ell$ respectively, their \emph{direct sum} $\sigma\oplus\tau$ is the permutation of length $k+\ell$ consisting of $\sigma$ followed by a shifted copy of $\tau$:
$$
(\sigma\oplus\tau)(i) \;=\;
\begin{cases}
  \sigma(i)   & \text{if~} i\leqslant k , \\
  k+\tau(i-k) & \text{if~} k+1 \leqslant i\leqslant k+\ell .
\end{cases}
$$
See Figure~\ref{figPermSums} for an illustration.

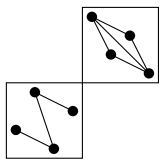
\begin{figure}[ht]
  $$
  \begin{tikzpicture}[scale=0.25]
    \plotpermnobox{}{2,4,1,3,8,6,7,5}
    \draw[] (0.5,0.5) rectangle (4.5,4.5);
    \draw[] (4.5,4.5) rectangle (8.5,8.5);
    \draw [] (1,2)--(3,1)--(2,4)--(4,3);
    \draw [] (5,8)--(6,6)--(8,5)--(7,7)--(5,8)--(8,5);
  \end{tikzpicture}
  \vspace{-3pt}
  $$
  \caption{The direct sum $\mathbf{2413}\oplus\mathbf{4231}$}
  \label{figPermSums}
\end{figure}
A permutation is called \emph{sum indecomposable} or just \emph{indecomposable} if it cannot be expressed as the direct sum of two shorter permutations.
For brevity, we call an indecomposable permutation simply an \emph{indecomposable}.
Observe that a permutation is indecomposable if and only if its (ordered) graph is connected.
Note that every permutation has a unique representation as the direct sum of a sequence of indecomposables. 

Suppose that $\SSS$ is a set of indecomposables 
that is downward closed under the subpermutation 
order (so if $\sigma\in\SSS$, $\tau\leqslant\sigma$ and $\tau$ is indecomposable, then $\tau\in\SSS$).
The \emph{sum closure} of $\SSS$, denoted~$\sumclosed\SSS$, is then the class of permutations of the form
$\sigma_1\oplus\sigma_2\oplus\ldots\oplus\sigma_r$, where each $\sigma_i\in\SSS$.
See Figure~\ref{figSumClosed} for an illustration.
Such a class is \emph{sum-closed}: if $\sigma,\tau\in\sumclosed\SSS$ then $\sigma\oplus\tau\in\sumclosed\SSS$.
Furthermore, every sum-closed class is the sum closure of its set of indecomposables.
\begin{figure}[ht]
$$
        \begin{tikzpicture}[scale=0.25]
          \draw [] (1,2)--(3,1)--(2,3);
          \draw [] (6,5)--(5,7)--(7,6);
          \draw [] (8,9)--(9,8);
          \plotpermnobox{}{2,3,1,4,7,5,6,9,8}
          \draw[gray,thin] (0.5,0.5) rectangle (9.5,9.5);
        \end{tikzpicture}
        \qquad\qquad\qquad
        \begin{tikzpicture}[scale=0.25]
          \draw [] (3,2)--(2,4)--(4,3);
          \draw [] (6,7)--(7,6);
          \plotpermnobox{}{1,4,2,3,5,7,6,8,9}
          \draw[gray,thin] (0.5,0.5) rectangle (9.5,9.5);
        \end{tikzpicture}
        \vspace{-3pt}
$$
  \caption{Two permutations in $\bigoplus\!\big\{\mathbf{1},\mathbf{21},\mathbf{231},\mathbf{312}\big\}$}
  \label{figSumClosed}
\end{figure}

All the permutation classes that we construct to prove our results are sum-closed, and we define them by specifying the (downward closed)
sets of indecomposables 
from which they are composed.
The remainder of this section is devoted to study the growth rates of sum-closed classes, and our main results, Lemmas~\ref{lemmaGRInterval1} and~\ref{lemmaGRInterval2}, provide conditions on when the growth rates of a family of sum-closed permutation classes form an interval.
In Section~\ref{sectConstructions}, we construct families of sum-closed classes satisfying these conditions, thereby proving Theorems~\ref{thmTheta} and~ \ref{thmLambda}.

Let us say that a set of permutations $\SSS$ is \emph{positive} and \emph{bounded} (by a constant~$c$) if $1\leqslant|\SSS_n|\leqslant c$ for all $n$.
As long as its set of indecomposables is positive and bounded, the growth rate of a sum-closed permutation class is easy to determine.
Variants of the following lemma can be found in the work of Albert \& Linton~\cite{AL2009} and of Vatter~\cite{Vatter2010b}.
\begin{lemma}
\label{lemmaGRSumClosed}
If $\SSS$ is a downward closed set of indecomposables that is positive and bounded, then $\gr(\sumclosed\SSS)$ is the unique $\gamma$ such that $\gamma>1$ and $S(\gamma^{-1})=\sum\limits_{n=1}^\infty|\SSS_n|\+\gamma^{-n}=1$.
\end{lemma}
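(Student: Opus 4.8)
The plan is to analyse the ordinary generating function $f(x)=\sum_{n\geqslant 1}|\SSS_n|\,x^n$ of the set of indecomposables, and the generating function $F(x)=\sum_{\pi\in\sumclosed\SSS}x^{|\pi|}$ of the whole sum-closed class, and to exploit the fact that every permutation has a \emph{unique} decomposition as a direct sum of indecomposables. That unique-decomposition fact gives the familiar symbolic identity
\[
F(x) \;=\; \sum_{r\geqslant 0} f(x)^r \;=\; \frac{1}{1-f(x)},
\]
valid as formal power series, so that $|(\sumclosed\SSS)_n|=[x^n]\,1/(1-f(x))$. From here the growth rate is controlled by the radius of convergence of $F$, which (since all coefficients are non-negative) is the smallest positive singularity of $1/(1-f(x))$, i.e. the smallest positive $x$ with $f(x)=1$.

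First I would record the analytic facts needed. Since $\SSS$ is positive and bounded, $1\leqslant|\SSS_n|\leqslant c$, so $f(x)$ has radius of convergence exactly $1$: it converges for $|x|<1$ (dominated by $c\sum x^n$) and diverges at $x=1$ (since $|\SSS_n|\geqslant 1$ gives $f(1)=\infty$). On the real interval $(0,1)$, $f$ is continuous, strictly increasing (all coefficients positive), with $f(0^+)=0$ and $f(x)\to\infty$ as $x\to 1^-$. Hence there is a \emph{unique} $\rho\in(0,1)$ with $f(\rho)=1$; set $\gamma=1/\rho>1$, which is exactly the $\gamma$ described in the statement (note $S(\gamma^{-1})=f(\rho)$ in the paper's notation). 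It remains to show $\gr(\sumclosed\SSS)=\gamma$, i.e. $\limsup_n |(\sumclosed\SSS)_n|^{1/n}=\gamma$ and that the liminf agrees so the limit exists.

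For the upper bound $\limsup_n |(\sumclosed\SSS)_n|^{1/n}\leqslant\gamma$: $F(x)=1/(1-f(x))$ is analytic on the real interval $[0,\rho)$ and, because $f$ is increasing with $f(\rho)=1$, we have $f(x)<1$ for all $x\in[0,\rho)$, so $F$ has non-negative coefficients and is finite on $[0,\rho)$; for any $\rho'<\rho$ the coefficients satisfy $|(\sumclosed\SSS)_n|\leqslant F(\rho')\,(\rho')^{-n}$, giving $\limsup_n|(\sumclosed\SSS)_n|^{1/n}\leqslant 1/\rho'$, and letting $\rho'\to\rho^-$ yields $\leqslant\gamma$. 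For the lower bound, I would argue directly by supermultiplicativity rather than fuss over the nature of the singularity: the sequence $a_n:=|(\sumclosed\SSS)_n|$ is supermultiplicative, $a_{m+n}\geqslant a_m a_n$, because concatenating (direct-summing) a sum-closed permutation of length $m$ with one of length $n$ gives a distinct sum-closed permutation of length $m+n$ and this map is injective. By Fekete's lemma $\lim_n a_n^{1/n}$ exists and equals $\sup_n a_n^{1/n}=:L\in(1,\infty]$; in particular the growth rate exists. If $L<\gamma$ then $L=1/r$ for some $r>\rho$, and $\sum_n a_n x^n$ would converge for $x\in(\rho,r)$; but for such $x$, $f(x)\geqslant f(\rho)=1$, so $1-f(x)\leqslant 0$ while $F(x)=\sum a_n x^n>0$, contradicting $F(x)=1/(1-f(x))$ (more carefully: $\sum_r f(x)^r$ diverges once $f(x)\geqslant 1$, and since $a_n=[x^n]\sum_r f(x)^r$ with all terms non-negative, $\sum_n a_n x^n=+\infty$, contradicting convergence). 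Hence $L\geqslant\gamma$, and combined with the upper bound, $L=\gamma$. Uniqueness of $\gamma$ as a value with $\gamma>1$ and $S(\gamma^{-1})=1$ follows from strict monotonicity of $f$ on $(0,1)$.

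The main obstacle is making the singularity/coefficient-asymptotics argument rigorous without over-engineering it: one must be careful that $f$ only converges on $|x|<1$, so $f(\rho)=1$ with $\rho<1$ is a genuine interior point and the composition $1/(1-f(x))$ is legitimately analytic at $0$; and one must handle the lower bound without assuming the singularity at $\rho$ is of a nice type (which need not hold without further hypotheses on $|\SSS_n|$). Routing the lower bound through Fekete's lemma and supermultiplicativity sidesteps this cleanly and simultaneously establishes existence of the growth rate. The boundedness of $|\SSS_n|$ is used precisely to force the radius of convergence of $f$ to be $1$ (so that $\rho<1$ and $f$ really is analytic near $0$), while positivity is used for strict monotonicity and hence uniqueness of $\rho$.
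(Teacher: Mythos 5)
Your proof is correct, but it takes a genuinely different route from the paper. The paper proves Lemma~\ref{lemmaGRSumClosed} in three lines by observing that $\sumclosed\SSS$ is in bijection with $\seq\SSS$, checking that positivity and boundedness of $|\SSS_n|$ force $S(z)$ to be aperiodic with radius of convergence $1$ and $S(x)\to+\infty$ as $x\to1^-$, and then invoking Lemma~\ref{lemmaSupercritical} (Flajolet--Sedgewick's ``asymptotics of supercritical sequences'', Theorem~V.1) as a black box. That theorem actually delivers the sharp asymptotics $|(\sumclosed\SSS)_n|\sim C\sigma^{-n}$, which is more than the lemma needs. You instead prove the statement from first principles: you set up the same generating-function identity $F=1/(1-f)$ coming from unique decomposition into indecomposables, extract the upper bound $\limsup|(\sumclosed\SSS)_n|^{1/n}\leqslant 1/\rho$ from analyticity of $F$ on $[0,\rho)$, and get both the lower bound and the \emph{existence} of the limit (which the paper's definition of growth rate requires) via supermultiplicativity of $|(\sumclosed\SSS)_n|$ and Fekete's lemma, closing the gap by observing that $\sum_r f(x)^r$ diverges once $f(x)\geqslant1$. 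The two proofs use positivity and boundedness of $|\SSS_n|$ for exactly the same purpose --- forcing the radius of convergence of $S$ to be $1$, so that the root $\rho$ of $S(x)=1$ is strictly inside the disc of convergence. Your version is more self-contained and avoids appealing to the full supercritical-schema machinery; the paper's is shorter at the cost of citing a heavier result. One small point worth making explicit in your write-up is why the map $(\sigma,\tau)\mapsto\sigma\oplus\tau$ from $(\sumclosed\SSS)_m\times(\sumclosed\SSS)_n$ into $(\sumclosed\SSS)_{m+n}$ is injective: with the lengths $m$ and $n$ fixed, $\sigma$ and $\tau$ are recovered as the patterns of the first $m$ and last $n$ entries of $\sigma\oplus\tau$, so the argument is sound.
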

This follows from the following standard analytic combinatorial result.
Here, $\seq{\GGG}$ is the class of objects each element of which consists of a sequence of elements of $\GGG$.
\begin{lemma}
[{\cite[Theorem~V.1]{FS2009}} ``Asymptotics of supercritical sequences'']\label{lemmaSupercritical}
Suppose $\FFF=\seq{\GGG}$ 
and the corresponding generating functions, $F$ and $G$, are analytic at $0$, with $G(0) = 0$ and
$$
\lim\limits_{x\rightarrow\rho^-}G(x) \;>\; 1,
$$
where $\rho$ is the radius of convergence
of $G$. If, in addition, $G$ is aperiodic, i.e. there
does not exist an integer $d \geqslant 2$ such that $G(z) = H(z^d)$ for some $H$ analytic at $0$, then
$\gr(\FFF)=\sigma^{-1}$,
where $\sigma$ is the only root in $(0, \rho)$ of $G(x) = 1$.
\end{lemma}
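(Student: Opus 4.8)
The statement is quoted as Theorem~V.1 of~\cite{FS2009}, so one option is simply to cite it; to sketch a proof, the plan is as follows. The sequence construction $\FFF=\seq{\GGG}$ gives the identity $F(z)=1/(1-G(z))$, so inside the disc of convergence of $G$ the singularities of $F$ are precisely the points where $G(z)=1$, and the growth of $F_n$ is governed by the one nearest the origin. I would first pin that point down: being a combinatorial generating function, $G$ has non-negative coefficients and $G(0)=0$, and $G\not\equiv0$ (a zero series would be periodic), so on $[0,\rho)$ it is continuous and strictly increasing; combined with the hypothesis $\lim_{x\to\rho^-}G(x)>1$, the intermediate value theorem yields a unique $\sigma\in(0,\rho)$ with $G(\sigma)=1$, and $G'(\sigma)=\sum_n n\,g_n\,\sigma^{n-1}\in(0,\infty)$ because $\sigma<\rho$ and $G$ is non-constant.

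Next I would show that $\sigma$ is the radius of convergence of $F$ and, crucially, the \emph{only} singularity of $F$ on the circle $|z|=\sigma$. For $|z|<\sigma$, the triangle inequality for series with non-negative coefficients gives $|G(z)|\leqslant G(|z|)<G(\sigma)=1$, so $1-G(z)\neq0$ and $F$ is analytic on that disc; since $G'(\sigma)\neq0$, $1-G$ has a simple zero at $\sigma$, so $F$ has a simple pole there, and hence (by the exponential-growth formula, or Pringsheim's theorem) the radius of convergence of $F$ is exactly $\sigma$. If $G(z_0)=1$ with $z_0=\sigma\,e^{i\vartheta}$ on that circle, then $\sum_n g_n\sigma^n e^{in\vartheta}=1=\sum_n g_n\sigma^n$ with every $g_n\sigma^n\geqslant0$ forces $e^{in\vartheta}=1$ for every $n$ in the support of $G$; aperiodicity says that support has greatest common divisor $1$, so $e^{i\vartheta}=1$ and $z_0=\sigma$.

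Finally I would extract the asymptotics. Since $\sigma<\rho$, the function $F$ is meromorphic on a disc $|z|<\sigma+\varepsilon$ whose only singularity is the simple pole at $\sigma$; expanding $1-G(z)=G'(\sigma)(\sigma-z)+O((z-\sigma)^2)$ shows its principal part there is $\frac{1/G'(\sigma)}{\sigma-z}$. Subtracting this principal part leaves a function analytic on $|z|<\sigma+\varepsilon$, whose coefficients are $O\big((\sigma+\tfrac\varepsilon2)^{-n}\big)$, so $F_n=\frac{1}{G'(\sigma)\,\sigma^{n+1}}+O\big((\sigma+\tfrac\varepsilon2)^{-n}\big)$. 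As $G'(\sigma)>0$ and $F_n\geqslant0$, this gives $\liminfty\sqrt[n]{F_n}=\sigma^{-1}$, i.e. $\gr(\FFF)=\sigma^{-1}$.

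I expect the second step to be the main obstacle. The bound $\limsup\sqrt[n]{F_n}=\sigma^{-1}$ is immediate once the pole at $\sigma$ is located, but upgrading it to a genuine limit requires ruling out any further singularity of $F$ on its circle of convergence — this is exactly where the aperiodicity of $G$ is used — and then converting ``one simple dominant pole'' into pure exponential growth via the meromorphic-remainder estimate of the last paragraph. Everything else is elementary real and complex analysis.
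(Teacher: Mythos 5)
Your proposal is correct; note that the paper itself gives no proof of this lemma at all — it is imported verbatim from Flajolet and Sedgewick (Theorem~V.1), so citation is exactly what the paper does. Your sketch reproduces the standard argument behind that theorem: uniqueness of $\sigma\in(0,\rho)$ with $G(\sigma)=1$ by monotonicity, the triangle-inequality/aperiodicity argument showing $z=\sigma$ is the only zero of $1-G$ on $|z|=\sigma$, and meromorphic coefficient asymptotics from the simple dominant pole. The only detail left implicit is the choice of $\varepsilon$ so that the closed disc $|z|\leqslant\sigma+\varepsilon$ (still inside $|z|<\rho$) contains no zero of $1-G$ other than $\sigma$; this follows since $1-G$ is analytic and not identically zero there, so its zeros are isolated and only finitely many lie in any such compact disc, and filling this in completes an otherwise sound proof.
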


\begin{proof}[Proof of Lemma~\ref{lemmaGRSumClosed}]
The sum-closed permutation class
$\sumclosed\SSS$ is in bijection with $\seq{\SSS}$.
Since $|\SSS_n|$ is positive and bounded, $S(z)$ is aperiodic, its radius of convergence is $1$,
and $\!\!\!\lim\limits_{\:\:\:x\rightarrow1^-}\!\!S(x)=+\infty$.
Thus
the requirements of Lemma~\ref{lemmaSupercritical} are satisfied, from which the desired result follows immediately.
\end{proof}


Building on our earlier results concerning expansions of real numbers
in non-integer bases,
we are now in a position to describe conditions under which
the set of growth rates of
a family of sum-closed permutation classes
includes an interval of the real line.
To state the lemma,
we introduce a notational convenience.
If $(a_n)$ is a bounded sequence of positive integers that enumerates a downward closed set of indecomposables $\SSS$ (i.e. $|\SSS_n|=a_n$ for all $n$), let us use
$\gr(\sumclosed (a_n))$ to denote the growth rate of
the sum-closed permutation class $\sumclosed\SSS$. 



\begin{lemma}\label{lemmaGRInterval1}
Given a sequence $(D_n)$ of non-empty finite sets of positive integers, let $\ell_n$ and $u_n$
denote the least and greatest elements of $D_n$ respectively,
and let $\Delta_n=u_n-\ell_n$.
Also,
let~$\delta_n$
be the largest gap between consecutive elements of $D_n$ (with $\delta_n=0$ if $|D_n|=1$).

If $(u_n)$ is bounded, then for every real number $\gamma$ such that $\gr(\sumclosed(\ell_n))\leqslant\gamma\leqslant\gr(\sumclosed(u_n))$ and
$$
\delta_n
 \;\leqslant\;
\sum\limits_{i=1}^{\infty}\Delta_{n+i}\+\gamma^{-i}
$$
for each~$n$,
there is some sequence $(a_n)$ with each
$a_n\in D_n$
such that
$\gamma=\gr(\sumclosed(a_n))$.
\end{lemma}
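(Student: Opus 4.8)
The plan is to reduce the statement to Lemma~\ref{lemmaBaseNotation1} by exhibiting $\gamma^{-1}$ as a point in a suitable set $(D_n')_{\gamma}$ of base-$\gamma$ expansions, and then to convert the digit sequence realising that expansion back into a sequence of indecomposable-class enumerations. First I would recall from Lemma~\ref{lemmaGRSumClosed} that, for a bounded positive-integer sequence $(a_n)$ enumerating a downward closed set of indecomposables, $\gamma=\gr(\sumclosed(a_n))$ is characterised by $\sum_{n\geqslant1}a_n\+\gamma^{-n}=1$, i.e.\ by $(a_n)_\gamma = 1$ in the non-integer-base notation of Section~2 (with base $\gamma$). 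So the target equation $\gamma = \gr(\sumclosed(a_n))$ is literally the statement that the chosen digits $a_n\in D_n$ satisfy $(a_n)_\gamma=1$. The hypotheses $\gr(\sumclosed(\ell_n))\leqslant\gamma\leqslant\gr(\sumclosed(u_n))$ say exactly that $(\ell_n)_\gamma\leqslant 1\leqslant(u_n)_\gamma$ (using that $(a_n)_\gamma$ is monotonically decreasing in $\gamma$ and that $x\mapsto(a_n)_x$ relationships line up — I would spell this sign bookkeeping out carefully, since it is where it is easiest to slip). Thus $1$ lies in the interval $\big[(\ell_n)_\gamma,(u_n)_\gamma\big]$.

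Next I would invoke Lemma~\ref{lemmaBaseNotation1} with $\beta=\gamma$: the sequence $(D_n)$ consists of non-empty finite sets of non-negative (indeed positive) reals, $(u_n)$ is bounded, $\gamma>1$, and the gap inequalities $\delta_n\leqslant\sum_{i\geqslant1}\Delta_{n+i}\+\gamma^{-i}$ hold for every $n$ by hypothesis. Hence $(D_n)_\gamma = \big[(\ell_n)_\gamma,(u_n)_\gamma\big]$, which we have just observed contains $1$. Therefore there is a sequence $(a_n)$ with $a_n\in D_n$ for all $n$ such that $(a_n)_\gamma = 1$.

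It remains to check that this digit sequence $(a_n)$ actually enumerates a downward closed set of indecomposables, so that Lemma~\ref{lemmaGRSumClosed} applies and gives $\gr(\sumclosed(a_n))=\gamma$. The sequence is bounded (by $\sup_n u_n$) and consists of positive integers since each $D_n$ is a finite set of positive integers; so by the notational convention introduced just before the lemma, $(a_n)$ does correspond to a sum-closed class, and by construction $(a_n)_\gamma=1$ with $\gamma>1$, which by the uniqueness clause of Lemma~\ref{lemmaGRSumClosed} forces $\gamma=\gr(\sumclosed(a_n))$. That completes the argument.

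The main obstacle is not any single deep step but the translation dictionary between the two halves of the paper: one must verify that the quantity controlled by Lemma~\ref{lemmaBaseNotation1} (the value $(a_n)_\gamma\in[0,1]$, with $\gamma$ playing the role of $\beta$) is exactly the quantity that Lemma~\ref{lemmaGRSumClosed} equates to $1$, and in particular that the inequality $\gr(\sumclosed(\ell_n))\leqslant\gamma$ translates to $(\ell_n)_\gamma\leqslant 1$ rather than its reverse. Concretely one needs that $t\mapsto (c_n)_t = \sum c_n t^{-n}$ is strictly decreasing on $(1,\infty)$ for any non-zero non-negative sequence $(c_n)$, so that $\gr(\sumclosed(c_n))$ — the unique $t>1$ with $(c_n)_t=1$ — satisfies $\gr(\sumclosed(c_n))\leqslant\gamma \iff (c_n)_\gamma\leqslant 1$; applying this with $c_n=\ell_n$ and $c_n=u_n$ pins $1$ inside $\big[(\ell_n)_\gamma,(u_n)_\gamma\big]$. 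Everything else is a direct citation of the two lemmas already proved.
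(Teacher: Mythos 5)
Your proof is correct and takes essentially the same route as the paper: use Lemma~\ref{lemmaGRSumClosed} to translate the hypotheses on $\gamma$ into $(\ell_n)_\gamma\leqslant 1\leqslant(u_n)_\gamma$, invoke Lemma~\ref{lemmaBaseNotation1} with $\beta=\gamma$ to obtain $(a_n)$ with $a_n\in D_n$ and $(a_n)_\gamma=1$, then convert back via Lemma~\ref{lemmaGRSumClosed}. One small imprecision in your final paragraph: being a bounded sequence of positive integers does not by itself guarantee that $(a_n)$ enumerates a downward closed set of indecomposables --- that realisability is a separate combinatorial matter arranged by the constructions in Section~\ref{sectConstructions}, and the paper implicitly defers it in the same way, so this does not create a gap relative to the intended reading of the lemma.
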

\begin{proof}
Let $\gamma_1=\gr(\sumclosed(\ell_n))$ and $\gamma_2=\gr(\sumclosed(u_n))$, so $\gamma_1\leqslant\gamma\leqslant\gamma_2$.
Now, by Lemma~\ref{lemmaGRSumClosed},
$$
(\ell_n)_\gamma
\;=\;
\sum\limits_{n=1}^{\infty}\ell_n\+\gamma^{-n}
\;\leqslant\;
\sum\limits_{n=1}^{\infty}\ell_n\+\gamma_1^{-n}
\;=\;
1
\;=\;
\sum\limits_{n=1}^{\infty}u_n\+\gamma_2^{-n}
\;\leqslant\;
\sum\limits_{n=1}^{\infty}u_n\+\gamma^{-n}
\;=\;
(u_n)_\gamma .
$$
Hence,
$
(\ell_n)_\gamma
\leqslant
1
\leqslant
(u_n)_\gamma
$
and so, since $\gamma$ satisfies the
gap inequalities,
by Lemma~\ref{lemmaBaseNotation1}
there is some sequence $(a_n)$ with each
$a_n\in D_n$
such that
$(a_n)_\gamma = 1$ and thus, by Lemma~\ref{lemmaGRSumClosed}, $\gamma=\gr(\sumclosed(a_n))$.
\end{proof}
This result generalises Proposition~2.4 in Vatter~\cite{Vatter2010b}, which treats only the case in which all the $D_n$ are identical and consist of an interval of integers $\{\ell,\ell+1,\ldots,u\}$.

As a consequence of this lemma, given a suitable sequence $(D_n)$ of sets of integers, if we could construct a family of sum-closed permutation classes such that
every sequence $(a_n)$ with each $a_n\in D_n$ enumerates
the indecomposables of
some member of the family, then the set of growth rates of the classes in the family would include an interval of the real line.
Returning to our original example in which $D_n=\{1,4\}$ for odd $n$ and $D_n=\{1,3,5,7,9\}$ for even $n$, suppose it were possible to construct
sum-closed classes whose indecomposables were enumerated by each sequence $\{(a_n):a_n\in D_n\}$.
Then the set of growth rates would include the interval from
$\gr(\sumclosed(1,1,\ldots))=2$ to the lesser of
$\gr(\sumclosed(4,9,4,9,\ldots))=2+\sqrt{14}$ 
and $\frac{1}{4}\+(3+\sqrt{89})$, the greatest value permitted by the gap inequalities. 
(Clearly, the fact that there are only countably many permutation classes with growth rate less than $\kappa\approx2.20557$ means that such a construction is, in fact, impossible.)

We now broaden this result to handle the case in which the $D_n$ contain generalised digits.
In the integer case, a term $a_n$ corresponds to a set consisting of $a_n$ indecomposables of length $n$.
In the generalised digit case, $a_n=c_0.c_1\ldots c_k$ corresponds to a set consisting of $c_0$ indecomposables of length $n$, $c_1$ indecomposables of length $n+1$, and so on, up to $c_k$ indecomposables of length $n+k$.

Before stating the generalised digit version of our lemma, we
generalise our terminology and notation.
Let us say that a sequence of generalised digits $(a_n)$ \emph{enumerates} a set~$\SSS$ of permutations if
$\SSS$ is the union $\biguplus \SSS^{\!(\!n\!)}$
of {disjoint} sets
$\SSS^{\!(\!n\!)}$,
such that, for each $n$, if $a_n=c_0.c_1\ldots c_k$ then $\SSS^{\!(\!n\!)}$~consists of exactly $c_i$ permutations of length $n+i$ for $i=0,\ldots,k$.
If $(a_n)$ is a sequence of generalised digits that enumerates a downward closed set of indecomposables $\SSS$,
let us use
$\gr(\sumclosed (a_n))$ to denote the growth rate
of the sum-closed permutation class $\sumclosed\SSS$.

In the proofs of our theorems, we make use of the following elementary fact.
Given a sequence of generalised digits that enumerates a set $\SSS$, there is a unique sequence of integers that also enumerates $\SSS$.
For example, if $a_n=1.12$ for odd $n$ and $a_n=3.1$ for even $n$, then
$(a_n)$ and the sequence
$(t_n)=(1,4,4,4,\ldots)$
are equivalent in this way.
We write such equivalences $(a_n)\equiv(t_n)$.

\begin{lemma}\label{lemmaGRInterval2}
Given a sequence $(D_n)$ of non-empty finite sets of generalised $\gamma$-digits, for a fixed $\gamma$ let $\ell_n$ and $u_n$
denote the least and greatest elements of $D_n$ respectively,
and let $\Delta_n=u_n-\ell_n$.
Also,
let
$\delta_n$
be the largest gap between consecutive elements of $D_n$ (with $\delta_n=0$ if $|D_n|=1$).

If the length and subdigits of all the generalised $\gamma$-digits in the $D_n$ are
bounded,
then for every real number $\gamma$ such that $\gr(\sumclosed(\ell_n))\leqslant\gamma\leqslant\gr(\sumclosed(u_n))$ and
$$
\delta_n
 \;\leqslant\;
\sum\limits_{i=1}^{\infty}\Delta_{n+i}\+\gamma^{-i}
$$
for each~$n$,
there is some sequence $(a_n)$ with each
$a_n\in D_n$
such that
$\gamma=\gr(\sumclosed(a_n))$.
\end{lemma}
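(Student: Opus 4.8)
The plan is to mirror the proof of Lemma~\ref{lemmaGRInterval1} exactly, substituting Lemma~\ref{lemmaBaseNotation2} for Lemma~\ref{lemmaBaseNotation1} at the crucial step. The only real content of Lemma~\ref{lemmaGRInterval1} was the observation that, by Lemma~\ref{lemmaGRSumClosed}, the defining equation $S(\gamma^{-1})=1$ for the growth rate $\gamma$ of a sum-closed class enumerated by $(a_n)$ is literally the statement $(a_n)_\gamma=1$; so producing a class of growth rate $\gamma$ amounts to finding an expansion of $1$ in base $\gamma$ with digits drawn from the $D_n$, which Pedicini-type results supply. For generalised digits the same identity holds, because a generalised digit $a_n=c_0.c_1\ldots c_k=\sum_{i=0}^k c_i\gamma^{-i}$ contributes exactly $c_i$ indecomposables of length $n+i$, so summing $a_n\gamma^{-n}$ over $n$ recovers $\sum_m|\SSS_m|\gamma^{-m}$ term by term.

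First I would fix $\gamma$ as in the hypothesis, set $\gamma_1=\gr(\sumclosed(\ell_n))$ and $\gamma_2=\gr(\sumclosed(u_n))$, and note $\gamma_1\leqslant\gamma\leqslant\gamma_2$. By Lemma~\ref{lemmaGRSumClosed} (applied to the integer sequences equivalent to $(\ell_n)$ and $(u_n)$, which are positive and bounded since the subdigits and lengths are bounded and each $D_n$ is non-empty), we have $(\ell_n)_{\gamma_1}=1=(u_n)_{\gamma_2}$, where now $(\ell_n)_\gamma$ denotes $\sum_n\ell_n\gamma^{-n}$ with each $\ell_n$ a generalised $\gamma$-digit. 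Since the map $\beta\mapsto(\ell_n)_\beta$ is non-increasing in $\beta$ (every subdigit coefficient is non-negative, and each power $\beta^{-m}$ decreases), $(\ell_n)_\gamma\leqslant(\ell_n)_{\gamma_1}=1$; similarly $(u_n)_\gamma\geqslant(u_n)_{\gamma_2}=1$. Hence $(\ell_n)_\gamma\leqslant1\leqslant(u_n)_\gamma$. Because $\gamma$ also satisfies the gap inequalities, Lemma~\ref{lemmaBaseNotation2} applies and tells us $1\in(D_n)_\gamma$, i.e.\ there is a sequence $(a_n)$ with each $a_n\in D_n$ and $(a_n)_\gamma=1$. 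Finally, the integer sequence $(t_n)\equiv(a_n)$ is positive and bounded, so $\sumclosed(a_n)$ is a genuine sum-closed permutation class, and $(a_n)_\gamma=\sum_n|\SSS_n|\gamma^{-n}=1$ means, by Lemma~\ref{lemmaGRSumClosed}, that its growth rate is $\gamma$.

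There is one subtlety worth flagging rather than a genuine obstacle: the generalised digits in Lemma~\ref{lemmaBaseNotation2} are defined relative to a \emph{fixed} $\beta$, and here we are choosing $\beta=\gamma$ and then asserting that the same $\gamma$ is the growth rate we obtain. This is consistent precisely because the whole argument is carried out at the single value $\gamma$: we evaluate all generalised digits at $\gamma$, find an expansion of $1$ in base $\gamma$, and read off a class whose growth rate is the root of $S(x^{-1})=1$, which is $\gamma$ by construction. So the circularity is only apparent. The remaining points --- monotonicity of $(\ell_n)_\beta$ and $(u_n)_\beta$ in $\beta$, the boundedness needed to invoke Lemmas~\ref{lemmaGRSumClosed} and~\ref{lemmaBaseNotation2}, and the equivalence of a generalised-digit sequence with its integer counterpart for the purpose of counting indecomposables --- are all routine. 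I expect the proof to be essentially a half-page adaptation, and indeed the paper may well omit it as ``essentially the same as'' that of Lemma~\ref{lemmaGRInterval1}.
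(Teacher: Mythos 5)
Your proposal is correct and matches the paper's approach exactly: the paper simply states ``We omit the proof since it is essentially the same as that for Lemma~\ref{lemmaGRInterval1}, but making use of Lemma~\ref{lemmaBaseNotation2} rather than Lemma~\ref{lemmaBaseNotation1},'' which is precisely the adaptation you carry out. Your explicit treatment of the monotonicity of $\beta\mapsto(\ell_n)_\beta$ and of the ``fixed $\gamma$'' point is a sensible filling-in of the details the paper leaves implicit.
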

We omit the proof since it is essentially the same as that for
Lemma~\ref{lemmaGRInterval1}, but making use of Lemma~\ref{lemmaBaseNotation2} rather than Lemma~\ref{lemmaBaseNotation1}.

Let us revisit our earlier generalised digit example in which
$D_n=\{1.1,1.11,1.12,1.2,1.21,1.22\}$ for odd~$n$ and $D_n=\{0\}$ for even $n$.
Suppose it were
possible to construct
sum-closed classes whose indecomposables were enumerated by each sequence $\{(a_n):a_n\in D_n\}$.
Then the set of growth rates of the classes
would include the interval from
$\gr(\sumclosed(1,1,\ldots))=2$ to the lesser of
$\gr(\sumclosed(1,2,3,2,3,\ldots))\approx2.51155$ (the real root of a cubic)
and $\half\+(1+\sqrt{13})$, the maximum value permitted by the gap inequalities.
(As with our previous example, such a construction is, in fact, impossible.)

We also need to use the following elementary analytic result, also found in Vatter~\cite{Vatter2010b}.
\begin{lemma}[{\cite[Proposition 2.3]{Vatter2010b}}] 
\label{lemmaGRSumClosedClose}
Given $\varepsilon>0$ and $c>0$, there is a positive integer $m$, dependent only on $\varepsilon$ and $c$, such that if
$(r_n)$ and $(s_n)$ are two sequences of positive integers bounded by $c$, and
$r_n=s_n$ for all $n\leqslant m$, then
$\gr(\sumclosed(r_n))$ and $\gr(\sumclosed(s_n))$ differ by no more than $\varepsilon$.
\end{lemma}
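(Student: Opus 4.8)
The plan is to realise both growth rates as reciprocals of roots of power series with bounded coefficients, and to show that these roots move Lipschitz-continuously under changes confined to the tail of the coefficient sequence. Write $R(x)=\sum_{n\geqslant1}r_n x^n$ and $S(x)=\sum_{n\geqslant1}s_n x^n$. Since $1\leqslant r_n,s_n\leqslant c$, both series have radius of convergence $1$, and by Lemma~\ref{lemmaGRSumClosed} there are unique $\rho,\sigma\in(0,1)$ with $R(\rho)=S(\sigma)=1$, and $\gr(\sumclosed(r_n))=1/\rho$, $\gr(\sumclosed(s_n))=1/\sigma$. So it suffices to bound $|\rho-\sigma|$ in terms of $m$.

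The first step — and, I expect, the only part needing any care — is to trap $\rho$ and $\sigma$ in a compact subinterval of $(0,1)$ depending only on $c$. From $r_n\geqslant1$ we get $R(x)\geqslant x/(1-x)$, so $R(\rho)=1$ forces $\rho\leqslant 1/2$; from $r_n\leqslant c$ we get $R(x)\leqslant cx/(1-x)$, so $R(\rho)=1$ forces $\rho\geqslant 1/(c+1)$; and likewise for $\sigma$. Thus $\rho,\sigma\in I:=[1/(c+1),\,1/2]$. This uniform confinement is exactly what makes the remaining estimates uniform over all admissible sequences.

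Next I would bound $R-S$ on $I$. Since $r_n=s_n$ for $n\leqslant m$ and $|r_n-s_n|\leqslant c$ always, for every $x\in I$
$$
|R(x)-S(x)| \;\leqslant\; \sum_{n>m}c\+x^n \;=\; c\,\frac{x^{m+1}}{1-x} \;\leqslant\; 2c\+2^{-(m+1)} \;=\; c\+2^{-m}.
$$
A one-line mean value estimate then finishes the job: $R'(x)=\sum_{n\geqslant1}n\+r_n x^{n-1}\geqslant r_1\geqslant1$ on $[0,1)$, so taking without loss of generality $\rho\leqslant\sigma$ and using $\sigma\in I$,
$$
c\+2^{-m}\;\geqslant\;|R(\sigma)-S(\sigma)|\;=\;|R(\sigma)-R(\rho)|\;=\;R'(\xi)\,(\sigma-\rho)\;\geqslant\;\sigma-\rho
$$
for some $\xi\in(\rho,\sigma)$.

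Finally, converting back to growth rates and using $\rho,\sigma\geqslant1/(c+1)$,
$$
\big|\gr(\sumclosed(r_n))-\gr(\sumclosed(s_n))\big| \;=\; \frac{\sigma-\rho}{\rho\+\sigma}\;\leqslant\;(c+1)^2(\sigma-\rho)\;\leqslant\; c\+(c+1)^2\+2^{-m},
$$
so it is enough to choose $m=\max\!\big(1,\lceil\log_2(c(c+1)^2/\varepsilon)\rceil\big)$, which depends only on $\varepsilon$ and $c$. There is no genuine obstacle beyond keeping the constants uniform: once $\rho$ and $\sigma$ are pinned into $I$, both the tail bound on $R-S$ and the lower bound on $R'$ are uniform, and everything else is mechanical.
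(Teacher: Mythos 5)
Your proof is correct and is essentially the same argument as the paper's: both pin the root into the interval $[1/(c+1),1/2]$ (equivalently the growth rate into $[2,c+1]$), bound the tail contribution by $c\cdot 2^{-m}$, use $r_1\geqslant 1$ to control how fast the series changes, and arrive at the identical bound $c\+(c+1)^2\+2^{-m}$ and the same choice of $m$. The only cosmetic difference is that you package the middle step via the mean value theorem applied to $R$, whereas the paper manipulates the series $\sum r_n\gamma^{-n}$ directly.
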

\begin{proof}
Let $\gamma_1=\gr(\sumclosed(r_n))$ and $\gamma_2=\gr(\sumclosed(s_n))$ and suppose that $\gamma_1<\gamma_2$.
Note that $\gamma_1\geqslant2$ because $(r_n)$ is positive, and $\gamma_2\leqslant c+1$ because $(s_n)$ is bounded by $c$.

From Lemma~\ref{lemmaGRSumClosed}, we have
$
\sum\limits_{n=1}^\infty r_n\+\gamma_1^{-n}
=
\sum\limits_{n=1}^\infty s_n\+\gamma_2^{-n}
$.
Hence, since
$r_n=s_n$ for $n\leqslant m$,
$$
\frac{\gamma_2-\gamma_1}{(c+1)^2}
\;\leqslant\;
\gamma_1^{-1}-\gamma_2^{-1}
\;\leqslant\;
\sum\limits_{n\leqslant m}
r_n\+(\gamma_1^{-n}-\gamma_2^{-n})
\;=\;
\sum\limits_{n>m} s_n\+\gamma_2^{-n} - \sum\limits_{n>m} r_n\+\gamma_1^{-n}
\;\leqslant\;
\frac{c}{2^m} .
$$
Thus, $m=\ceil{\log_2 
\frac{c\+(c+1)^2}{\varepsilon}
}$ suffices.
\end{proof}

\section{Constructions that yield intervals of growth rates}\label{sectConstructions}

To prove our theorems, we construct families of permutation classes whose indecomposables are enumerated by sequences
satisfying the requirements of Lemma~\ref{lemmaGRInterval2}.
In doing this, it helps to take a graphical perspective.
Recall from Section~\ref{sectLambdaPaperIntro} that the (ordered) graph, $G_\sigma$, of a permutation $\sigma$ of length~$n$ has
vertex set $\{1,\ldots,n\}$
with an edge between vertices $i$ and $j$ 
if $i<j$ and $\sigma(i)>\sigma(j)$.
Recall also that
$\tau\leqslant\sigma$
if and only if
$G_\tau$ is an induced ordered subgraph of $G_\sigma$ and that $\sigma$ is indecomposable if and only if $G_\sigma$ is \emph{connected}.
It tends to be 
advantageous to think of indecomposables as those permutations whose graphs are connected.

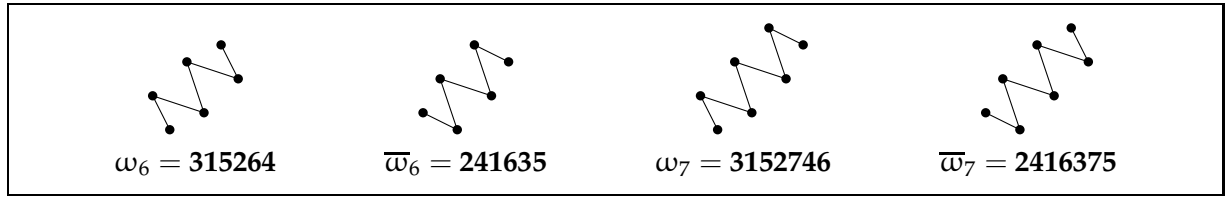
\begin{figure}[ht]
  \mybox{
  \vspace{3pt}
  $$
  \begin{tikzpicture}[scale=0.225]
    \plotpermnobox{6}{3, 1, 5, 2, 6, 4}
    \draw [thin] (2,1)--(1,3)--(4,2)--(3,5)--(6,4)--(5,6);
    \node at(3.5,-1){$\omega_6 = \mathbf{315264}$};
  \end{tikzpicture}
  \quad\qquad
  \begin{tikzpicture}[scale=0.225]
    \plotpermnobox{6}{2, 4, 1, 6, 3, 5}
    \draw [thin] (1,2)--(3,1)--(2,4)--(5,3)--(4,6)--(6,5);
    \node at(3.5,-1){$\overline\omega_6 = \mathbf{241635}$};
  \end{tikzpicture}
  \quad\qquad
  \begin{tikzpicture}[scale=0.225]
    \plotpermnobox{6}{3, 1, 5, 2, 7, 4, 6}
    \draw [thin] (2,1)--(1,3)--(4,2)--(3,5)--(6,4)--(5,7)--(7,6);
    \node at(3.5,-1){$\omega_7 = \mathbf{3152746}$};
  \end{tikzpicture}
  \quad\qquad
  \begin{tikzpicture}[scale=0.225]
    \plotpermnobox{6}{2, 4, 1, 6, 3, 7, 5}
    \draw [thin] (1,2)--(3,1)--(2,4)--(5,3)--(4,6)--(7,5)--(6,7);
    \node at(3.5,-1){$\overline\omega_7 = \mathbf{2416375}$};
  \end{tikzpicture}
  \vspace{-6pt}
  $$
  }
  \caption{Primary and secondary oscillations}\label{figOscillating}
\end{figure}
All our constructions are based on permutations whose graphs are \emph{paths}.
Such permutations are called \emph{increasing oscillations}. We distinguish two cases,
the \emph{primary oscillation} of length $n$, which we denote $\omega_n$,
whose \emph{least} entry corresponds to an end of the path,
and the \emph{secondary oscillation} of length $n$, denoted~$\overline\omega_n$
whose \emph{first} entry corresponds to an end of the path.
See Figure~\ref{figOscillating} for an illustration.
Trivially, we have $\omega_1=\overline\omega_1=\mathbf{1}$ and $\omega_2=\overline\omega_2=\mathbf{21}$.
The (lower and upper) \emph{ends} of an increasing oscillation are the
entries corresponding to the (lower and upper)
ends
of its path graph.

\HIDE{
Formally, for $n\geqslant 4$, the primary oscillation of length $n$, $\omega_n$, and the secondary oscillation of length $n$, $\overline\omega_n$, can be defined as follows:
$$
\begin{array}{rclr}
\omega_n          &=&          3,    1,\,\,\, 5,2,\,\,\, 7,4,\,\,\, 9,6,\,\,\, \ldots,\,\,\, n-3,n-6,\,\,\, n-1,            n-4,\,\,\,
         n,   n-2 & \text{~for even~}n\geqslant4, \\[3pt]
\omega_n          &=&          3,    1,\,\,\, 5,2,\,\,\, 7,4,\,\,\, 9,6,\,\,\, \ldots,\,\,\, n-2,n-5,\,\,\, n,\phantom{{}-0}n-3,\,\,\, \phantom{n,{}}n-1 & \text{~for odd~}n\geqslant5,  \\[3pt]
\overline\omega_n &=& \phantom{0,{}} 2,\,\,\, 4,1,\,\,\, 6,3,\,\,\, 8,5,\,\,\, \ldots,\,\,\, n-2,n-5,\,\,\, n,\phantom{{}-0}n-3,\,\,\, \phantom{n,{}}n-1 & \text{~for even~}n\geqslant4, \\[3pt]
\overline\omega_n &=& \phantom{0,{}} 2,\,\,\, 4,1,\,\,\, 6,3,\,\,\, 8,5,\,\,\, \ldots,\,\,\, n-3,n-6,\,\,\, n-1,            n-4,\,\,\,
         n,   n-2 & \text{~for odd~}n\geqslant5.
\end{array}
$$
} 

The key permutations in our constructions are 
formed by \emph{inflating} (i.e. replacing)
each end of a primary oscillation of odd length with an increasing permutation.
For $n\geqslant4$ and $r,s\geqslant1$,
let~$\omega_n^{r,s}$ denote the
permutation of length $n-2+r+s$
formed by replacing 
the lower and upper ends of $\omega_n$ with the increasing permutations of length $r$ and~$s$ respectively.
The graph of $\omega_n^{r,s}$ thus consists of a path on $n-2$ vertices with $r$ pendant edges attached to its lower end and
$s$ pendant edges attached to its upper end.
See Figure~\ref{figR743} for some examples.
We also occasionally make use of $\overline\omega_n^{r,s}$, which we define analogously.

\begin{figure}[ht]
  \mybox{
  \vspace{-6pt}
  $$
  \begin{tikzpicture}[scale=0.225]
    \plotpermnobox{9}{4, 1, 2, 6, 3, 9, 5, 7, 8}
    \draw [thin] (2,1)--(1,4);
    \draw [thin] (3,2)--(1,4)--(5,3)--(4,6)--(7,5)--(6,9)--(8,7);
    \draw [thin] (6,9)--(9,8);
    \node at(5,-1){$\;\;\omega_7^{2,2}$};
  \end{tikzpicture}
  \quad
  \begin{tikzpicture}[scale=0.225]
    \plotpermnobox{10}{4, 1, 2, 6, 3, 10, 5, 7, 8, 9}
    \draw [thin] (2,1)--(1,4);
    \draw [thin] (3,2)--(1,4)--(5,3)--(4,6)--(7,5)--(6,10)--(8,7);
    \draw [thin] (10,9)--(6,10)--(9,8);
    \node at(5.5,-1){$\;\;\omega_7^{2,3}$};
  \end{tikzpicture}
  \quad
  \begin{tikzpicture}[scale=0.225]
    \plotpermnobox{10}{5, 1, 2, 3, 7, 4, 10, 6, 8, 9}
    \draw [thin] (2,1)--(1,5)--(4,3);
    \draw [thin] (3,2)--(1,5)--(6,4)--(5,7)--(8,6)--(7,10)--(9,8);
    \draw [thin] (7,10)--(10,9);
    \node at(5.5,-1){$\;\;\omega_7^{3,2}$};
  \end{tikzpicture}
  \quad
  \begin{tikzpicture}[scale=0.225]
    \plotpermnobox{11}{5, 1, 2, 3, 7, 4, 11, 6, 8, 9, 10}
    \draw [thin] (2,1)--(1,5)--(4,3);
    \draw [thin] (3,2)--(1,5)--(6,4)--(5,7)--(8,6)--(7,11)--(9,8);
    \draw [thin] (11,10)--(7,11)--(10,9);
    \node at(6,-1){$\;\;\omega_7^{3,3}$};
  \end{tikzpicture}
  \quad
  \begin{tikzpicture}[scale=0.225]
    \plotpermnobox{11}{6, 1, 2, 3, 4, 8, 5, 11, 7, 9, 10}
    \draw [thin] (2,1)--(1,6)--(4,3);
    \draw [thin] (5,4)--(1,6);
    \draw [thin] (3,2)--(1,6)--(7,5)--(6,8)--(9,7)--(8,11)--(11,10);
    \draw [thin] (8,11)--(10,9);
    \node at(6,-1){$\;\;\omega_7^{4,2}$};
  \end{tikzpicture}
  \quad
  \begin{tikzpicture}[scale=0.225]
    \plotpermnobox{12}{6, 1, 2, 3, 4, 8, 5, 12, 7, 9, 10, 11}
    \draw [thin] (2,1)--(1,6)--(4,3);
    \draw [thin] (5,4)--(1,6);
    \draw [thin] (3,2)--(1,6)--(7,5)--(6,8)--(9,7)--(8,12)--(11,10);
    \draw [thin] (12,11)--(8,12)--(10,9);
    \node at(6.5,-1){$\;\;\omega_7^{4,3}$};
  \end{tikzpicture}
  \vspace{-9pt}
  $$
  }
  \caption{The set of permutations $R_7^{4,3}$}\label{figR743}
\end{figure}
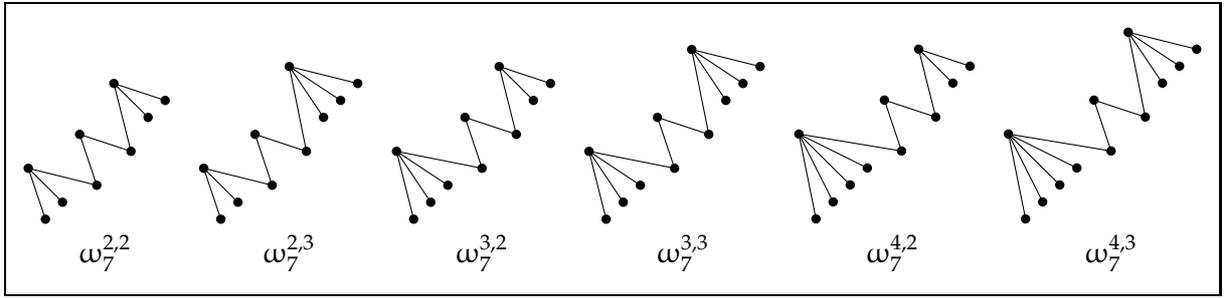
The building blocks of our constructions are the following sets of indecomposables.
Given a primary oscillation with both ends inflated, i.e. some $\omega_n^{r,s}$ with $r,s\geqslant2$, we define~$R_n^{r,s}$ to be the set of primary oscillations with both ends inflated that are
subpermutations of~$\omega_n^{r,s}$:
$$
R_n^{r,s} \;=\; \{\omega_n^{u,v} \,:\, 2\leqslant u\leqslant r,\,2\leqslant v\leqslant s\}.
$$
We only make use of $R_n^{r,s}$ for odd $n$. 
See Figure~\ref{figR743} for an example.

Let us investigate the properties of these sets.
Firstly, if $n\neq m$, then no element of $R_n^{r,s}$ is a subpermutation of 
an element of~$R_m^{r,s}$.
This is a consequence of 
the following elementary observation,
which follows directly from consideration of the (ordered) graphs of the permutations. 
\begin{obs} 
If $n,m\geqslant4$, $n\neq m$ and $r,s,u,v\geqslant2$, then $\omega_n^{r,s}$ and $\omega_m^{u,v}$ are incomparable under the subpermutation 
order.
\end{obs}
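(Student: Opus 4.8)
The plan is to argue entirely with the (ordered) graphs, using the fact recalled above that $\tau\leqslant\sigma$ if and only if $G_\tau$ is an induced ordered subgraph of $G_\sigma$. Since incomparability is symmetric in the two permutations, it suffices to prove the following: if $G_{\omega_n^{r,s}}$ occurs as an induced subgraph of $G_{\omega_m^{u,v}}$ (forgetting the vertex order entirely), then $n=m$. By contraposition this gives $\omega_n^{r,s}\not\leqslant\omega_m^{u,v}$ whenever $n\neq m$, and applying the same statement with the roles of $n$ and $m$ interchanged yields the claimed incomparability.

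First I would record the structure of the graphs involved. Each $G_{\omega_n^{r,s}}$ is a tree: a ``spine'' path on $n-2$ vertices $p_1-p_2-\cdots-p_{n-2}$ together with $r$ leaves attached to $p_1$ and $s$ leaves attached to $p_{n-2}$. Because $n\geqslant4$ the spine has at least two vertices, so $p_1\neq p_{n-2}$; because $r,s\geqslant2$, the vertices $p_1$ and $p_{n-2}$ have degrees $r+1\geqslant3$ and $s+1\geqslant3$, whereas every other vertex (an internal spine vertex, or a leaf) has degree at most $2$. Thus $p_1$ and $p_{n-2}$ are \emph{exactly} the vertices of degree at least $3$, and the unique path joining them is the whole spine, on $n-2$ vertices. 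The same description applies to $G_{\omega_m^{u,v}}$, with spine $q_1-\cdots-q_{m-2}$ and branch vertices $q_1,q_{m-2}$.

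Now suppose $\phi$ embeds $G_{\omega_n^{r,s}}$ as an induced subgraph of $G_{\omega_m^{u,v}}$. Since the image is an induced subgraph isomorphic to $G_{\omega_n^{r,s}}$, the degree of $\phi(x)$ in $G_{\omega_m^{u,v}}$ is at least the degree of $x$ in $G_{\omega_n^{r,s}}$ for every vertex $x$. In particular $\phi(p_1)$ and $\phi(p_{n-2})$ have degree at least $3$ in $G_{\omega_m^{u,v}}$, so each of them lies in $\{q_1,q_{m-2}\}$; as $\phi$ is injective and $p_1\neq p_{n-2}$, in fact $\{\phi(p_1),\phi(p_{n-2})\}=\{q_1,q_{m-2}\}$. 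The spine $p_1,p_2,\ldots,p_{n-2}$ induces a path in $G_{\omega_n^{r,s}}$, hence its image is an induced path in $G_{\omega_m^{u,v}}$ joining $q_1$ to $q_{m-2}$; but $G_{\omega_m^{u,v}}$ is a tree, so the only path joining $q_1$ to $q_{m-2}$ is its spine, which has $m-2$ vertices. Since $\phi$ is injective this image has exactly $n-2$ vertices, so $n-2=m-2$ and $n=m$, as required.

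I do not expect a serious obstacle here; the one genuine idea is the structural observation that exactly two vertices of each $G_{\omega_n^{r,s}}$ have degree $\geqslant3$, which turns the problem into a statement about the unique path between branch vertices in a tree. (A crude length invariant such as the diameter, or the longest induced path, shows only that $\omega_n^{r,s}$ and $\omega_m^{u,v}$ cannot \emph{both} be subpermutations of each other; it does not rule out one containing the other when $n<m$, so the branch-vertex argument is needed.) The only point requiring a moment's care is the small case $n=4$ (or $m=4$), where the spine degenerates to two vertices with no internal vertex; but $p_1$ and $p_{n-2}$ remain distinct vertices of degree $r+1,s+1\geqslant3$, so every step above goes through verbatim. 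Finally, the vertex order plays no role: all the statements used are purely graph-theoretic, so the conclusion holds a fortiori for induced ordered subgraphs.
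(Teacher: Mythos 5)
Your proof is correct and follows the same route the paper intends: the paper states the observation ``follows directly from consideration of the (ordered) graphs'' and supplies no further argument, and your branch-vertex analysis of the tree $G_{\omega_n^{r,s}}$ is a careful, complete formalisation of exactly that. Your remark that the hypotheses $n\geqslant4$ and $r,s\geqslant2$ are precisely what make the two spine endpoints distinct and of degree $\geqslant 3$ is the right thing to flag, and your observation that the conclusion holds already for unordered graphs (so the ordering is irrelevant here) matches the paper's implicit use of ``the (ordered) graphs.''
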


Thus for fixed $r$ and $s$, and varying $n$, the $R_n^{r,s}$ form a collection of sets of permutations, no member of one being a subpermutation of a member of another. This 
should be compared with
the 
concept of an \emph{antichain}, which is a set of permutations, none of which is a subpermutation of another.

We build permutation classes by specifying that, for some fixed $r$ and $s$, their indecomposables must
include some subset of $R_n^{r,s}$ for each (odd) $n$.
Because, if $n\neq m$, any element of $R_n^{r,s}$ is incomparable with any element of $R_m^{r,s}$,
the choice of subset to include can be made independently for each~$n$.
This provides the flexibility we need to construct families of classes whose growth rates include an interval.
(Vatter's simpler constructions in~\cite{Vatter2010b},
which are also based on inflating the ends of increasing oscillations,
rely on being able to choose any subset of an infinite antichain to include in the indecomposables of a class.)

Clearly, for each $n$, the subset of $R_n^{r,s}$ included in the indecomposables must be downward closed.
It has been observed heuristically that
subsets containing $\omega_n^{3,2}$ 
are better for generating intervals of growth rates.
In light of this, we define $\FFF_n^{r,s}$ to be the family of downward closed subsets of $R_n^{r,s}$ that contain $\omega_n^{3,2}$ (and hence also contain $\omega_n^{2,2}$).
See Figure~\ref{figRHasseDiag} for an illustration.

For example, $\FFF_n^{4,3}$ consists of the seven downsets whose 
sets of maximal elements
are
$$
\{\omega_n^{3,2}\},\;\; \{\omega_n^{3,2},\omega_n^{2,3}\},\;\; \{\omega_n^{3,3}\},\;\; \{\omega_n^{4,2}\},\;\; \{\omega_n^{4,2},\omega_n^{2,3}\},\;\; \{\omega_n^{4,2},\omega_n^{3,3}\}, \;\;\text{and}\;\; \{\omega_n^{4,3}\}.
$$
To each downset $S$ in $\FFF_n^{r,s}$, we can associate a generalised digit $c_0.c_1\ldots c_k$ 
such that $c_i$ records 
the number of elements in $S$ of length $n+2+i$ for each $i\geqslant0$.
For example, the generalised digits that enumerate 
the elements of $\FFF_n^{4,3}$ are (in the same order as above)
$$
1.1,\;\; 1.2,\;\; 1.21,\;\; 1.11,\;\; 1.21,\;\; 1.22 \;\;\text{and}\;\; 1.221.
$$
Note that distinct downsets in $\FFF_n^{r,s}$ may be enumerated by the same generalised digit.

\begin{figure}[ht]
\newcommand{\hpt}[4]{\fill[white,radius=0.425] (#1,#2) circle; \node at(#1,#2){\scriptsize{#3,#4}};}
\newcommand{\hpg}[4]{\fill[white,radius=0.425] (#1,#2) circle; \node [gray] at(#1,#2){\scriptsize{#3,#4}};}
$$
  \begin{tikzpicture}[scale=0.525]
    \draw [thin] ( 0,4)--(-5, 9);
    \draw [thin] ( 1,5)--(-4,10);
    \draw [thin] ( 2,6)--(-3,11);
    \draw [thin] ( 3,7)--(-2,12);
    \draw [thin] ( 4,8)--(-1,13);
    \draw [thin] ( 0,4)--( 4, 8);
    \draw [thin] (-1,5)--( 3, 9);
    \draw [thin] (-2,6)--( 2,10);
    \draw [thin] (-3,7)--( 1,11);
    \draw [thin] (-4,8)--( 0,12);
    \draw [thin] (-5,9)--(-1,13);
    \draw [very thick] (-6,9)--(-5,10)--(-3,8)--(0,11)--(3,8)--(4,9)--(5,8);
    \hpt{ 0}{ 4}{2}{2} \hpt{-1}{ 5}{3}{2} \hpt{-2}{ 6}{4}{2} \hpt{-3}{ 7}{5}{2} \hpt{-4}{ 8}{6}{2} \hpt{-5}{ 9}{7}{2}
    \hpt{ 1}{ 5}{2}{3} \hpt{ 0}{ 6}{3}{3} \hpt{-1}{ 7}{4}{3} \hpt{-2}{ 8}{5}{3} \hpg{-3}{ 9}{6}{3} \hpg{-4}{10}{7}{3}
    \hpt{ 2}{ 6}{2}{4} \hpt{ 1}{ 7}{3}{4} \hpt{ 0}{ 8}{4}{4} \hpt{-1}{ 9}{5}{4} \hpg{-2}{10}{6}{4} \hpg{-3}{11}{7}{4}
    \hpt{ 3}{ 7}{2}{5} \hpt{ 2}{ 8}{3}{5} \hpt{ 1}{ 9}{4}{5} \hpt{ 0}{10}{5}{5} \hpg{-1}{11}{6}{5} \hpg{-2}{12}{7}{5}
    \hpt{ 4}{ 8}{2}{6} \hpg{ 3}{ 9}{3}{6} \hpg{ 2}{10}{4}{6} \hpg{ 1}{11}{5}{6} \hpg{ 0}{12}{6}{6} \hpg{-1}{13}{7}{6}
  \end{tikzpicture}
\qquad\qquad
  \begin{tikzpicture}[scale=0.525]
    \draw [thin] ( 0,4)--(-5, 9);
    \draw [thin] ( 1,5)--(-4,10);
    \draw [thin] ( 2,6)--(-3,11);
    \draw [thin] ( 3,7)--(-2,12);
    \draw [thin] ( 4,8)--(-1,13);
    \draw [thin] ( 0,4)--( 4, 8);
    \draw [thin] (-1,5)--( 3, 9);
    \draw [thin] (-2,6)--( 2,10);
    \draw [thin] (-3,7)--( 1,11);
    \draw [thin] (-4,8)--( 0,12);
    \draw [thin] (-5,9)--(-1,13);
    \draw [very thick] (-6,9)--(-4,11)--(-2,9)--(-1,10)--(1,8)--(2,9)--(3,8)--(4,9)--(5,8);
    \hpt{ 0}{ 4}{2}{2} \hpt{-1}{ 5}{3}{2} \hpt{-2}{ 6}{4}{2} \hpt{-3}{ 7}{5}{2} \hpt{-4}{ 8}{6}{2} \hpt{-5}{ 9}{7}{2}
    \hpt{ 1}{ 5}{2}{3} \hpt{ 0}{ 6}{3}{3} \hpt{-1}{ 7}{4}{3} \hpt{-2}{ 8}{5}{3} \hpt{-3}{ 9}{6}{3} \hpt{-4}{10}{7}{3}
    \hpt{ 2}{ 6}{2}{4} \hpt{ 1}{ 7}{3}{4} \hpt{ 0}{ 8}{4}{4} \hpt{-1}{ 9}{5}{4} \hpg{-2}{10}{6}{4} \hpg{-3}{11}{7}{4}
    \hpt{ 3}{ 7}{2}{5} \hpt{ 2}{ 8}{3}{5} \hpg{ 1}{ 9}{4}{5} \hpg{ 0}{10}{5}{5} \hpg{-1}{11}{6}{5} \hpg{-2}{12}{7}{5}
    \hpt{ 4}{ 8}{2}{6} \hpg{ 3}{ 9}{3}{6} \hpg{ 2}{10}{4}{6} \hpg{ 1}{11}{5}{6} \hpg{ 0}{12}{6}{6} \hpg{-1}{13}{7}{6}
  \end{tikzpicture}
  \vspace{-4pt}
$$
  \caption{Two downsets in $\FFF_n^{7,6}$, both enumerated by the generalised digit $1.234531$, and the corresponding paths through the Hasse diagram of $R_n^{7,6}$}
  \label{figRHasseDiag}
\end{figure}
\begin{lemma}\label{lemmaSizeOfF}
The set $\FFF_n^{r,s}$ contains $\binom{r+s-2}{r-1}-s$ downsets with $\sum\limits_{i=1}^{s-1}(s-i)\binom{r-2}{i}-2$ distinct enumerations.
\end{lemma}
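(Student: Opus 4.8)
The plan is to count the two quantities separately by analysing the structure of downsets in $\FFF_n^{r,s}$, which are in bijection with certain lattice paths (or antichains) in the grid $[2,r]\times[2,s]$. Throughout, fix $n$ and write a downset $S\in\FFF_n^{r,s}$ via its set of maximal elements; since $S$ is downward closed in $R_n^{r,s}$ and contains $\omega_n^{3,2}$, the maximal elements form an antichain in the poset $\{(u,v):2\leqslant u\leqslant r,\ 2\leqslant v\leqslant s\}$ under the coordinatewise order, and the condition $\omega_n^{3,2}\in S$ means $S$ is not contained in the single ``column'' $\{(2,v)\}$. A downward-closed subset of a grid poset corresponds exactly to a monotone lattice path separating $S$ from its complement; the paths pictured in Figure~\ref{figRHasseDiag} are precisely these.

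First I would count the downsets. Dropping the constraint $\omega_n^{3,2}\in S$, the nonempty downward-closed subsets of the $(r-1)\times(s-1)$ grid $[2,r]\times[2,s]$ are counted by the number of monotone staircase paths, which is $\binom{(r-1)+(s-1)}{r-1}=\binom{r+s-2}{r-1}$ once we also allow the empty downset (this is the standard bijection between order ideals of a product of two chains and NE lattice paths). Among these, the ones we must exclude are exactly those $S$ with $\omega_n^{3,2}\notin S$, i.e.\ those order ideals contained in the first column $\{(2,v):2\leqslant v\leqslant s\}$ together with the empty ideal: the ideal $\varnothing$ and the ideals $\{(2,2),(2,3),\ldots,(2,j)\}$ for $j=2,\ldots,s$, giving $1+(s-1)=s$ excluded ideals. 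Hence $|\FFF_n^{r,s}|=\binom{r+s-2}{r-1}-s$, as claimed. (I should double-check the off-by-one: the downsets required to contain $\omega_n^{3,2}$ automatically contain $\omega_n^{2,2}$, so the "must-contain" condition is a single covering relation, and the excluded set is exactly the principal-or-empty ideals in the leftmost chain — there are $s$ of them counting $\varnothing$.)

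Next I would count distinct enumerations. The generalised digit associated to $S$ is $c_0.c_1\cdots c_k$ where $c_i=\#\{(u,v)\in S: u+v-4 = i\}$ is the number of elements of $S$ on the $i$-th antidiagonal of the grid. So two downsets have the same enumeration iff they have the same antidiagonal size profile. The point is that an antidiagonal $\{(u,v): u+v=m\}\cap([2,r]\times[2,s])$ is a chain, and a downset meeting it in $c$ elements meets it in the bottom-most $c$ elements of that chain — so specifying, for each antidiagonal, how many elements lie in $S$ is almost enough to reconstruct $S$, except that the reconstruction can fail to be an order ideal: the "staircase" of counts can be non-monotone in a way that still corresponds to a valid profile realised by *some* ideal but not uniquely. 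I would make this precise by describing which integer sequences $(c_0,c_1,\ldots)$ arise, and then counting them. The cleanest route is: a valid enumeration is determined by how many cells of $S$ lie in each antidiagonal, subject to $c_i \le$ (length of the $i$-th antidiagonal) and the feasibility constraint that there exists a lattice path realising it; I expect the set of realisable profiles to be in bijection with something like the downsets themselves minus an overcount coming from distinct ideals sharing a profile. Working out the correction term is where the binomial sum $\sum_{i=1}^{s-1}(s-i)\binom{r-2}{i}$ comes from: $\binom{r-2}{i}$ counts the antichains of size related to $i$ supported away from the first column, and the weight $(s-i)$ counts the vertical freedom. Concretely I would argue that the number of downsets sharing their enumeration with at least one other downset, counted with multiplicity one less than the size of their "enumeration class," equals $\binom{r+s-2}{r-1}-s - \big(\sum_{i=1}^{s-1}(s-i)\binom{r-2}{i}-2\big)$, and verify the identity by a direct bijective or generating-function computation.

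The main obstacle will be the second count: identifying exactly when two distinct order ideals of the grid have the same antidiagonal-size profile, and organising these "collisions" so that the count of equivalence classes telescopes into the stated sum $\sum_{i=1}^{s-1}(s-i)\binom{r-2}{i}-2$. I expect the collisions to come from "swapping" an L-shaped corner of the staircase path for a different corner on the same pair of antidiagonals — a local move that preserves all $c_i$ — and the bookkeeping of how many ideals lie in each class (and hence how much to subtract) is the delicate part; the $\binom{r-2}{i}$ factor strongly suggests conditioning on the set of antidiagonals on which $S$ is "full" except in the first column, with $i$ indexing the relevant width and $(s-i)$ the number of admissible heights. Everything else — the bijection between order ideals and lattice paths, the exclusion of the $s$ first-column ideals, and the reduction of "same enumeration" to "same antidiagonal profile" — is routine and I would state it with a sentence each, deferring the one nontrivial binomial identity to the \emph{Mathematica} notebook cited in the introduction if a slick bijective proof does not present itself.
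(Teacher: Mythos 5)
Your first count is correct and matches the paper's approach: both translate order ideals of the $(r-1)\times(s-1)$ grid $[2,r]\times[2,s]$ into monotone lattice paths, giving $\binom{r+s-2}{r-1}$ in total, and both identify the $s$ ideals to subtract as $\varnothing$ together with the $s-1$ principal ideals of the chain $\{\omega_n^{2,v}:2\leqslant v\leqslant s\}$.

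The second count has a genuine gap, and also a local error. The error: an antidiagonal $\{(u,v):u+v=m\}$ of the grid is an \emph{antichain} in the coordinatewise order (for two distinct points on it, one coordinate increases while the other decreases), not a chain as you assert; consequently there is no ``bottom-most $c$ elements'' of it, and a downset may intersect a given antidiagonal in any of several incomparable subsets consistent with the neighbouring antidiagonals. This is exactly why two distinct downsets can share an enumeration (cf.\ Figure~\ref{figRHasseDiag}), so the sentence ``a downset meeting it in $c$ elements meets it in the bottom-most $c$'' contradicts both the poset structure and the phenomenon you then try to analyse. The gap: after correctly reducing the problem to counting realisable antidiagonal profiles, you do not actually count them. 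You speculate that collisions come from an L-shaped swap, guess that $\binom{r-2}{i}$ will appear, and propose to ``verify the identity by a direct bijective or generating-function computation'' or defer it to \emph{Mathematica}; none of that constitutes a proof. The paper closes this step by choosing a canonical representative in each enumeration class, namely the downset whose elements on each antidiagonal are pushed as far toward the $\omega_n^{u,2}$ side as possible, observes that the canonical downsets correspond exactly to those lattice paths in which a run of two or more northeast steps can occur only at the very start or the very end, and then counts such paths by conditioning on the lengths $i$ and $j$ of the initial and final northeast runs, giving $\binom{r-2}{s-i-j-1}$ for each pair $(i,j)$; summing and removing the two degenerate cases ($\varnothing$ and $\{\omega_n^{2,2}\}$) yields $\sum_{i=1}^{s-1}(s-i)\binom{r-2}{i}-2$. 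That canonical-representative idea is the missing ingredient in your argument.
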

\begin{proof}
Each downset in $\FFF_n^{r,s}$ corresponds to a path consisting of northeast and southeast steps through the Hasse diagram of $R_n^{r,s}$ starting at the left of $\omega_n^{r,2}$ and ending at the right of $\omega_n^{2,s}$.
(see Figure~\ref{figRHasseDiag}). There are a total of $\binom{r+s-2}{r-1}$ such paths, $s$ of which correspond to downsets not containing $\omega_n^{3,2}$.

Given multiple downsets with the same enumeration,
we choose the one
for which, for each $k\geqslant4$, the $c_k$ elements of length $k$ are
$$
\omega_n^{k-2,2},\,\omega_n^{k-3,3},\,\ldots,\,\omega_n^{k-c_k-1,c_k+1}.
$$
This is the downset whose elements are
as far to the left as possible in the Hasse diagram of~$R_n^{r,s}$ (see the right diagram in Figure~\ref{figRHasseDiag}).
Such downsets correspond to paths in which a sequence of more than one northeast step may only occur initially or finally.
There are $\binom{r-2}{s-i-j-1}$ such paths with an initial sequence of $i$ northeast steps and a final sequence of $j$ northeast steps.
The result follows by summing over $i$ and $j$ and excluding the 
terms that correspond to
the empty downset and the singleton downset $\{\omega_n^{2,2}\}$.
\end{proof}

\Needspace*{5\baselineskip}
We also need to take into account the indecomposables that are subpermutations of 
$\omega_n^{r,s}$ but are \emph{not} elements of~$R_n^{r,s}$.
For odd $n$, these are of the following types:
\begin{bullets}
  \item Primary oscillations $\omega_m$ 
  and secondary oscillations $\overline\omega_m$.
  \item Permutations whose graphs are stars 
  $K_{1,u}$ 
  and whose first (and greatest) entry corresponds to the internal vertex;
  we use~$\psi_u$ to denote these star permutations.
  \item Increasing oscillations with just one end inflated: $\omega_m^{u,1}$, $\omega_m^{1,v}$ and $\overline\omega_m^{1,v}$. 
\end{bullets}
\vspace{6pt}
\begin{figure}[ht]
  \mybox{
  \vspace{-6pt}
  $$
  \begin{tikzpicture}[scale=0.225]
    \plotpermnobox{6}{3, 1, 5, 2, 6, 4}
    \draw [thin] (2,1)--(1,3)--(4,2)--(3,5)--(6,4)--(5,6);
    \node at(3.5,-1){$\;\;\omega_6$};
  \end{tikzpicture}
  \;\qquad\;
  \begin{tikzpicture}[scale=0.225]
    \plotpermnobox{6}{2, 4, 1, 6, 3, 5}
    \draw [thin] (1,2)--(3,1)--(2,4)--(5,3)--(4,6)--(6,5);
    \node at(3.5,-1){$\;\;\overline\omega_6$};
  \end{tikzpicture}
  \;\qquad\;
  \begin{tikzpicture}[scale=0.225]
    \plotpermnobox{6}{6, 1, 2, 3, 4, 5}
    \draw [thin] (2,1)--(1,6)--(3,2);
    \draw [thin] (4,3)--(1,6)--(5,4);
    \draw [thin] (6,5)--(1,6);
    \node at(3.5,-1){$\;\;\psi_5$}; 
  \end{tikzpicture}
  \;\qquad\;
  \begin{tikzpicture}[scale=0.225]
    \plotpermnobox{6}{5, 1, 2, 3, 6, 4}
    \draw [thin] (2,1)--(1,5)--(3,2);
    \draw [thin] (4,3)--(1,5)--(6,4)--(5,6);
    \node at(3.5,-1){$\;\;\omega_4^{3,1}$};
  \end{tikzpicture}
  \;\qquad\;
  \begin{tikzpicture}[scale=0.225]
    \plotpermnobox{6}{4, 1, 2, 6, 3, 5}
    \draw [thin] (2,1)--(1,4)--(3,2);
    \draw [thin] (1,4)--(5,3)--(4,6)--(6,5);
    \node at(3.5,-1){$\;\;\omega_5^{2,1}$};
  \end{tikzpicture}
  \;\qquad\;
  \begin{tikzpicture}[scale=0.225]
    \plotpermnobox{6}{3, 1, 6, 2, 4, 5}
    \draw [thin] (2,1)--(1,3)--(4,2)--(3,6)--(5,4);
    \draw [thin] (3,6)--(6,5);
    \node at(3.5,-1){$\;\;\omega_5^{1,2}$};
  \end{tikzpicture}
  \;\qquad\;
  \begin{tikzpicture}[scale=0.225]
    \plotpermnobox{6}{2,6,1,3,4,5}
    \draw [thin] (1,2)--(3,1)--(2,6)--(4,3);
    \draw [thin] (6,5)--(2,6)--(5,4);
    \node at(3.5,-1){$\;\;\overline\omega_4^{1,3}$};
  \end{tikzpicture}
  \vspace{-9pt}
  $$
  }
  \caption{The elements of $Q^{4,3}$ of size 6}\label{figS426}
\end{figure}
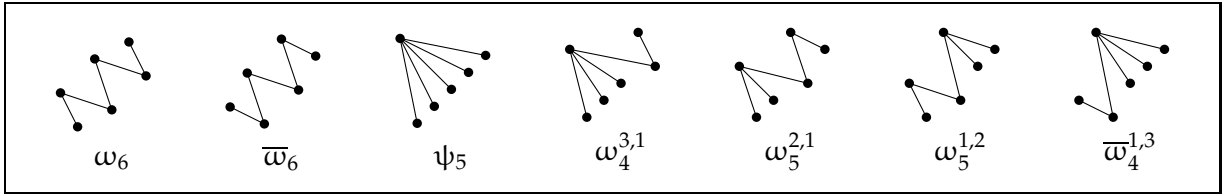
Given $r,s\geqslant2$, let $Q^{r,s}$ be the (infinite) set of 
indecomposables that are subpermutations of 
$\omega_n^{r,s}$ for some odd $n\geqslant5$, but are not elements of~$R_n^{r,s}$.
See Figure~\ref{figS426} for an illustration.

We can enumerate $Q^{r,s}$ explicitly. In doing so, we make use of two additional notational conventions. A sequence of integers $(a_n)$ whose entries have the same value for all $n\geqslant k$ is denoted $(a_1,a_2,\ldots,a_{k-1},\overline{a_k})$.
For example, 
$(1,\overline4)=(1,4,4,4,\ldots)$.
We also use
$c^k$, in both sequences and generalised digits, for the term $c$ repeated $k$ times.
Thus,
$(0^3,1^4)=(0,0,0,1,1,1,1)$ and $0.0^41= 0.00001$.

\begin{lemma}\label{lemmaQCount}
  Suppose $r\geqslant s$ and $(q_n)$ is the sequence that enumerates $Q^{r,s}$. Then,
  $$
  (q_n) \;=\;
  \begin{cases}
     \;
     \big(1,\,1,\,2,\,\overbrace{3,\,5,\,7,\,\ldots,\,2r-1}^{r-1},\,\overline{2r}\big)
     , & \text{if $r=s$;} \\[8pt]
     \;
     \overunderbraces
     {&\br{2}{s}} 
     {\big(1,\,1,\,2,\, & 3,\,5,\,7,\,\ldots,\, & 2s+1 & ,\,2s+2,\,2s+3,\,\ldots,\,\overline{r+s} & \big)}
     {&&\br{2}{r-s}} 
     , & \text{otherwise.}
  \end{cases}
  $$
\end{lemma}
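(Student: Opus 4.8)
The plan is to count, for each length $n$, exactly which indecomposable subpermutations of $\omega_n^{r,s}$ (for some odd $n \geq 5$) fail to lie in $R_n^{r,s}$, organizing the count by the three bullet types listed just before the lemma: oscillations $\omega_m, \overline\omega_m$; star permutations $\psi_u$; and singly-inflated oscillations $\omega_m^{u,1}$, $\omega_m^{1,v}$, $\overline\omega_m^{1,v}$. The key preliminary observation is that an indecomposable of length $m$ is a subpermutation of some $\omega_n^{r,s}$ with $n$ odd if and only if its graph embeds as an induced connected subgraph of the graph of $\omega_n^{r,s}$ — a path on $n-2$ vertices with $r$ pendant edges at one end and $s$ at the other. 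Such induced connected subgraphs are themselves ``caterpillar-like'': a path with at most two bunches of pendant edges, one at each end, with the pendant bunches of size at most $r$ and at most $s$ respectively. So the entire enumeration reduces to a careful but elementary graph-theoretic census, and then one reads off $q_m = |Q^{r,s}_m|$ for each $m$.

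First I would handle the small lengths $m = 1, 2, 3$ directly: there is $\mathbf 1$ (length 1), $\mathbf{21}$ (length 2), and the two indecomposables of length 3 — but only $\mathbf{231}$ and $\mathbf{312}$ are indecomposable, and each arises (e.g. $\mathbf{231} = \psi_2$ up to the conventions here, $\mathbf{312}$ as a short oscillation), giving $q_1 = q_2 = 1$ and $q_3 = 2$. Then for $4 \leq m \leq$ (roughly) $s+2$, the count grows because we can have: the two oscillations $\omega_m, \overline\omega_m$; the star $\psi_{m-1}$; and the singly-inflated oscillations $\omega_{m'}^{u,1}, \omega_{m'}^{1,v}, \overline\omega_{m'}^{1,v}$ for the various ways of splitting $m$ into a path part plus one pendant bunch, subject to the bunch size being at most $r$ or at most $s$. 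Since $r \geq s$, the constraint from the $s$-end is the binding one, and once $m$ exceeds $s+2$ the number of new permutations of length $m$ stabilizes (each additional length just shifts the path length by one while the pendant-bunch possibilities are now all capped). This is exactly the phenomenon producing the arithmetic-then-constant pattern $1,1,2,3,5,7,\ldots,2s+1,2s+2,\ldots,\overline{r+s}$ in the $r > s$ case, and the shorter $1,1,2,3,5,\ldots,2r-1,\overline{2r}$ in the $r = s$ case. I would present a table for each bullet type showing its contribution to $q_m$ as a function of $m$ (with the cutoffs at $m \approx s+2$ and $m \approx r+2$), then sum the columns.

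The main obstacle — and the place where care is genuinely needed rather than routine — is avoiding double-counting and correctly identifying the cutoffs. Several of these families overlap at small lengths: for instance $\psi_u$ is $\omega_4^{u-2,1}$-like for small $u$, short oscillations coincide with singly-inflated oscillations of even shorter oscillations, and $\omega_m^{1,v}$ versus $\overline\omega_m^{1,v}$ can coincide or differ depending on the parity of $m$ and which end is inflated. I would resolve this by passing entirely to the graph picture: each element of $Q^{r,s}$ of length $m$ is an induced connected subgraph on $m$ vertices of the infinite ``double-ended caterpillar'' $\bigcup_n G_{\omega_n^{r,s}}$ that is \emph{not} of the form (path) + ($\geq 2$ pendants at each end), and I would classify these subgraphs by (path length $p$, low-end pendant count $a$, high-end pendant count $b$) with $a \leq r$, $b \leq s$, $p + a + b = m$, $\min(a,b) \leq 1$, and then note that distinct triples give distinct permutations (using the Observation-style rigidity that the embedding into the oscillation graph is essentially unique, together with tracking which physical end — lower or upper — each pendant bunch sits at, which is what distinguishes $\omega_m^{u,1}$ from $\omega_m^{1,v}$ from $\overline\omega_m^{1,v}$). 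Counting lattice points in that region as $m$ increases gives precisely the claimed piecewise-arithmetic sequence; verifying the base cases $q_1, q_2, q_3, q_4$ by hand against Figure~\ref{figS426} confirms the offsets.
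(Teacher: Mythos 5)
The general instinct here — pass to the graph picture, see $Q^{r,s}$ as induced connected subgraphs of the double-ended caterpillar that avoid the ``two pendant bunches of size $\geqslant 2$'' configuration, and count by tracking where the path and pendant bunches sit — is in the right spirit, and it is indeed where the paper's proof lives. But the central claim in your proposal, that ``distinct triples $(p,a,b)$ give distinct permutations,'' is false, and this is precisely where the real work of the lemma is hiding. Three things go wrong. First, a pendant bunch of size $0$ or $1$ is indistinguishable from simply extending the path by that many vertices, so $(p,1,b)$ and $(p+1,0,b)$ describe the same abstract subgraph; already for $m=1$ your region contains three lattice points $(1,0,0)$, $(0,1,0)$, $(0,0,1)$ but $q_1 = 1$. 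Second, the map from graph type to permutation is not one-to-one: a path on $m\geqslant3$ vertices is the graph of \emph{two} distinct permutations, $\omega_m$ and $\overline\omega_m$, so a single triple $(m,0,0)$ must be counted twice. Third, the two ends of $\omega_n^{r,s}$ are genuinely asymmetric: inflating the lower end always gives a primary form $\omega_m^{u,1}$, but inflating the upper end gives $\omega_m^{1,v}$ or $\overline\omega_m^{1,v}$ depending on the parity of the path length, and only one of the two arises at each length. Your ``tracking which physical end the bunch sits at'' gestures at this but does not resolve it into a count. Checking base cases against Figure~\ref{figS426} would expose all three discrepancies, but the proposal does not say how to repair them.

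The paper's proof sidesteps all of this by abandoning any single universal parameterization and instead writing $Q^{r,s}$ as an explicit disjoint union of five families, each with its own by-length enumeration sequence: primary oscillations $(\overline{1})$; secondary oscillations $(0^2,\overline{1})$ starting at length $3$ since $\omega_m=\overline\omega_m$ for $m\leqslant2$; stars $\psi_u$ for $3\leqslant u\leqslant r+1$ (starting at $u=3$ because $\psi_2 = \omega_3$); lower-end-inflated $\omega_m^{u,1}$ with $2\leqslant u\leqslant r$; and the parity-alternating upper-end-inflated family $\omega_m^{1,v}$ (odd $m$) or $\overline\omega_m^{1,v}$ (even $m$), $2\leqslant v\leqslant s$. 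The disjointness is built into the index ranges, the overcounting and parity issues are handled in the choice of which objects each family contains, and the lemma follows by termwise addition. If you want to rescue the lattice-point picture you would need to fold in exactly these corrections, at which point it essentially becomes the paper's partition in disguise.
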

\begin{proof}
  The set $Q^{r,s}$ is composed of the following disjoint sets of indecomposables:
  \begin{bullets}
    \item Primary oscillations $\omega_m$ for each $m\geqslant1$, enumerated by $(\overline{1})$.
    \item Secondary oscillations $\overline\omega_m$, for each $m\geqslant3$, enumerated by $(0^2,\overline{1})$.
    \item Star permutations $\psi_u$, for $3\leqslant u\leqslant r+1$, enumerated by $(0^3,1^{r-1})$.
    \item Primary oscillations with the lower end inflated $\omega_m^{u,1}$, for each $m\geqslant 4$ and $2\leqslant u\leqslant r$, enumerated by $(0^4,1,2,3,\ldots,r-2,\overline{r-1})$.
    \item Increasing oscillations with the upper end inflated, $\omega_m^{1,v}$ for odd $m$ and $\overline\omega_m^{1,v}$ for even $m$, for each $m\geqslant 4$ and $2\leqslant v\leqslant s$, enumerated by $(0^4,1,2,3,\ldots,s-2,\overline{s-1})$.
  \end{bullets}
  The result follows by adding the five sequences termwise.
\end{proof}

We now have all the building blocks we need.
Given $r\geqslant3$, $s\geqslant2$, and odd $k\geqslant5$, we define $\Phi_{r,s,k}$ to be the family of those permutation classes whose indecomposables are the union of $Q^{r,s}$ together with 
an element of $\FFF_n^{r,s}$ for each odd $n\geqslant k$:
$$
\Phi_{r,s,k} \;=\;
\Big\{
\sumclosed \big( Q^{r,s} \:\cup\: S_k \:\cup\: S_{k+2} \:\cup\: S_{k+4} \:\cup\: \ldots
\big) \::\:
S_n\in \FFF_n^{r,s}
, \, n=k,k+2,\ldots
\Big\} .
$$
The sequence of families $(\Phi_{5,3,k})_{k=5,7,\ldots}$ is what we need to prove our first theorem, which we restate below.
The set of growth rates of permutation classes in
each family $\Phi_{5,3,k}$ %
consists of an interval, $I_k$, such that the sequence $(I_k)$ of these intervals approaches $\theta_B$ from above.

\begin{repthm}{thmTheta}
Let $\theta_B\approx2.35526$ be the unique real root of $x^7-2\+x^6-x^4-x^3-2\+x^2-2\+x-1$.
For any $\varepsilon>0$, there exist $\delta_1$ and $\delta_2$ with $0<\delta_1<\delta_2<\varepsilon$ such that every value in the interval $[\theta_B+\delta_1,\theta_B+\delta_2]$ is the growth rate of a permutation class.
\end{repthm}
\begin{proof}
Let $(q_n)$ be the sequence that enumerates 
$Q^{5,3}$.
By Lemma~\ref{lemmaQCount},
$(q_n)=(1,1,2,3,5,7,\overline8)$. It can readily be checked using Lemma~\ref{lemmaGRSumClosed} that
$\gr(\sumclosed(q_n))=\theta_B$.

Let $k\geqslant5$ be odd.
For each odd $n\geqslant k+2$, let
$$
F_n \;=\; \{1.1,1.11,1.111,1.2,1.21,1.211,1.22,1.221,1.222,1.2221\}
$$
be the set of generalised digits
that enumerate 
sets of indecomposables in
$\FFF_{n-2}^{5,3}$.
Otherwise (if $n$ is even or $n\leqslant k$), let $F_n=\{\+0\+\}$.
Now, for each $n$, let $D_n=\{q_n+f:f\in F_n\}$.

So, by construction, for every permutation class $\sumclosed\SSS\in\Phi_{5,3,k}$ there is a corresponding sequence $(a_n)$, with each
$a_n\in D_n$, that enumerates $\SSS$.

Let $\ell_n=q_n+1.1$ for odd $n\geqslant k+2$ and $\ell_n=q_n$ otherwise.
Similarly, let $u_n=q_n+1.2221$ for odd $n\geqslant k+2$ and $u_n=q_n$ otherwise.
Note that $\ell_n$ and $u_n$ depend on $k$.
We have the following equivalences:
$$
\begin{array}{rcl}
(\ell_n) &\!\equiv\!& (1,1,2,3,5,7,\overbrace{8,8,\ldots,8}^{k-5},\overline9) \\[8pt]
(u_n)    &\!\equiv\!& (1,1,2,3,5,7,\overbrace{8,8,\ldots,8}^{k-5},9,10,11,\overline{12})
\end{array}
$$
We now apply Lemma~\ref{lemmaGRInterval2}.
By Corollary~\ref{corGapIneqs}, the gap inequalities are
$$
\begin{array}{rcl}
1                           -\gamma^{-2}              &\leqslant& \gamma^{-1} +2\+\gamma^{-2} +2\+\gamma^{-3} +\gamma^{-4} , \qquad \\[4pt]
  \gamma -1 -2\+\gamma^{-1} +\gamma^{-2} +\gamma^{-3} &\leqslant& \gamma^{-1} +2\+\gamma^{-2} +2\+\gamma^{-3} +\gamma^{-4} .
\end{array}
$$
These
necessitate only that 
the growth rate does not exceed $\gamma_{\max}\approx2.470979$, the unique positive root of $x^4-2\+x^3-x^2-1$. This is independent of the value of $k$, and
is
greater than $\gr(\sumclosed(u_n))$ for all odd $k\geqslant5$
since, by Lemma~\ref{lemmaGRSumClosed}, $\gr(\sumclosed(u_n))\approx2.362008$ if $k=5$.

So, for each $k$ we have an interval of growth rates:
If $\gamma$ is such that $\gr(\sumclosed(\ell_n))\leqslant\gamma\leqslant\gr(\sumclosed(u_n))$, then there is some permutation class in $\Phi_{5,3,k}$ whose growth rate is $\gamma$.

Moreover, by Lemma~\ref{lemmaGRSumClosedClose},
\[
\liminfty[k] \gr(\sumclosed(\ell_n)) \;=\;
\liminfty[k] \gr(\sumclosed(u_n)) \;=\;
\gr(\sumclosed(q_n)) \;=\; \theta_B, 
\]
so these intervals can be found arbitrarily close to $\theta_B$.

For further details of the calculations in this proof, see~\cite[Section~2]{BevanLambdaCalcs}.
\end{proof}

For our second theorem, we need to add extra sets of indecomposables to our constructions.
As before, we start with $r\geqslant3$, $s\geqslant2$, and odd $k\geqslant5$.
A suitable collection, $\HHH$, of extra sets of indecomposables satisfies the following two conditions:
\begin{bullets}
  \item Each set in $\HHH$ is disjoint from $Q^{r,s}$ and also disjoint from each $R_n^{r,s}$ for odd $n\geqslant k$.
  \item For each set $H\in\HHH$, the union 
  $H\cup Q^{r,s}$ is a downward closed set of indecomposables.
\end{bullets}
Given these conditions,
we define $\Phi_{r,s,k,\HHH}$ to be the family of those permutation classes whose indecomposables are the union of $Q^{r,s}$
together with an element of $\HHH$
and an element of $\FFF_n^{r,s}$ for each odd $n\geqslant k$:
$$
\Phi_{r,s,k,\HHH} \;=\;
\Big\{
\sumclosed \big( Q^{r,s} \:\cup\: H \:\cup\: S_k \:\cup\: S_{k+2} \:\cup\: S_{k+4} \:\cup\: \ldots
\big) \::\:
H \in \HHH \:\text{~and~}\:
S_n\in \FFF_n^{r,s}
, \, n=k,k+2,\ldots
\Big\} .
$$

We define 
our extra sets of indecomposables
by specifying 
an upper set of \emph{maximal} indecomposables, $U$, and a lower set of \emph{required} indecomposables, $L$.
If $S$ is a set of indecomposables, let
${\downarrow}S$ denote the downset consisting of those indecomposables that are subpermutations of elements of $S$.
Then, given $r,s\geqslant2$ and suitable sets of indecomposables $U$ and $L$, we use
$U\,{\Downarrow}_{r,s}\+L$
to denote the collection of those downward closed subsets of ${\downarrow}U\setminus Q^{r,s}$
that include the set of required indecomposables~$L$.
For instance, we have
$\FFF^{r,s}_n=\{\omega_n^{r,s}\}\,{\Downarrow}_{r,s}\+\{\omega_n^{3,2}\}$.


\begin{example}\label{exH}
Let's consider as an example the family $\Phi_{5,3,5,\HHH}$ where $\HHH 
= \{\omega_5^{7,1}\}\,{\Downarrow}_{5,3}\+\{\psi_7\}$;
see $\pi_0$ and $\mu_1$ in Figure~\ref{figColABPerms} below.
The sets in $\HHH$ consist of indecomposables that are subpermutations of $\omega_5^{7,1}$ but are not in $Q^{5,3}$.
There are six such indecomposables: $\psi_7$, $\psi_8$, $\omega_4^{6,1}$, $\omega_4^{7,1}$, $\omega_5^{6,1}$, and $\omega_5^{7,1}$.
The collection $\HHH$ consists of the nine nonempty downward closed subsets of these six permutations. 
See Figure~\ref{figHHasseDiag}.
\begin{figure}[ht]
\newcommand{\hpt}[2]{\fill[white,radius=0.6] (#1) circle; \node at(#1){\small{#2}};}
$$
  \begin{tikzpicture}[scale=0.65]
    \draw [thick] (2,3)--(3.5,4.5)--(5,3)--(2,0)--(0.5,1.5)--(2,3)--(3.5,1.5);
    \hpt{2  ,0  }{$\psi_7$}
    \hpt{0.5,1.5}{$\psi_8$}            \hpt{3.5,1.5}{$\omega_4^{6,1}$}
    \hpt{2  ,3  }{$\omega_4^{7,1}$}    \hpt{5  ,3  }{$\omega_5^{6,1}$}
    \hpt{3.5,4.5}{$\omega_5^{7,1}$}
  \end{tikzpicture}
  \vspace{-4pt}
$$
  \caption{The Hasse diagram of the indecomposables that are elements of sets in $\HHH$}
  \label{figHHasseDiag}
\end{figure}

The set of indecomposables in a permutation class in our family consists of $Q^{5,3}$ together with an element of $\FFF_n^{5,3}$ for each odd $n\geqslant5$ and an extra set from $\HHH$.
By Lemma~\ref{lemmaQCount},
$Q^{5,3}$ contributes $(q_n)=(1,1,2,3,5,7,\overline8)$ to the enumeration of the indecomposables, and
for each odd $n\geqslant5$, there are ten distinct generalised digits
that enumerate
sets of indecomposables in $\FFF_n^{5,3}$, ranging between 1.1 and 1.2221.
Let $F_n$ consist of this set of generalised digits for odd $n\geqslant7$ and otherwise contain only 0.
The extra sets in $\HHH$
have seven distinct enumerations. These can be represented by the set of generalised digits
$$
H_1
\;=\;
\{0.0^61,\, 0.0^611,\, 0.0^6111,\, 0.0^612,\, 0.0^6121,\, 0.0^6122,\, 0.0^61221\}.
$$
Now, let $D_1=\{q_n+h:h\in H_1\}$ and for each $n>1$, let $D_n=\{q_n+f:f\in F_n\}$.

So, by construction, for every permutation class $\sumclosed\SSS\in\Phi_{5,3,5,\HHH}$ there is a corresponding sequence $(a_n)$, with each
$a_n\in D_n$, that enumerates $\SSS$.
The minimal enumeration sequence is $(\ell_n)\equiv(1, 1, 2, 3, 5, 7, 9, 10, \overline9)$ for which the growth rate is $\gr(\sumclosed(\ell_n)) \approx 2.36028$. Similarly,
the maximal enumeration sequence is $(u_n)\equiv(1, 1, 2, 3, 5, 7, 9, 11, 13, 14, 13, \overline{12})$ for which the growth rate is $\gr(\sumclosed(u_n)) \approx 2.36420$.

We now apply Lemma~\ref{lemmaGRInterval2}.
By Corollary~\ref{corGapIneqs}, the gap inequalities are
$$
\begin{array}{rcl}
         1                  - \gamma^{-2}                             &\leqslant& \gamma^{-1} + 2\+\gamma^{-2} + 2\+\gamma^{-3} + \gamma^{-4} , \qquad \\[4pt]
\gamma - 1 - 2\+\gamma^{-1} + \gamma^{-2} + \gamma^{-3}               &\leqslant& \gamma^{-1} + 2\+\gamma^{-2} + 2\+\gamma^{-3} + \gamma^{-4} , \\[4pt]
                              \gamma^{-2}               - \gamma^{-4} &\leqslant& \gamma^{-1} + 2\+\gamma^{-2} + 2\+\gamma^{-3} + \gamma^{-4} .
\end{array}
$$
These 
necessitate only that the growth rate not exceed $\gamma_{\max}\approx2.47098$.
Thus the growth rates of permutation classes in our example family $\Phi_{5,3,5,\HHH}$ form an interval.

For further details of the calculations in this example, see~\cite[Section~3]{BevanLambdaCalcs}.
\end{example}


The proof of our second theorem follows similar lines to this example.


\begin{repthm}{thmLambda}
Let $\lambda_B\approx2.35698$ be the unique positive root of $x^8-2\+x^7-x^5-x^4-2\+x^3-2\+x^2-x-1$.
Every value at least $\lambda_B$ is the growth rate of a permutation class.
\end{repthm}
\begin{proof}
In~\cite{Vatter2010b}, Vatter has shown that there are permutation classes of every growth rate at least $\lambda_A\approx2.48187$ (the unique real root of $x^5-2\+x^4-2\+x^2-2\+x-1$).
It thus suffices to exhibit permutation classes whose growth rates cover the interval $[\lambda_B,\lambda_A]$.
With the $\pi_i$ and $\mu_j$ as in Figures~\ref{figColABPerms}--\ref{figColEPerms} below,
we claim that the permutation classes in the following five families meet our needs:

\begin{tabular}{lccllccl}
$\quad$\textbf{Family} &$\!\!\!\!\!$\textbf{A} &$\!\!\!\!\!\!$\textbf{:}&
$\Phi_{5,3,7,\AAA}$ & $\!$where$\!$ &
$\AAA$ & $\!\!\!=\!\!\!$ & $\{\pi_1\}\,{\Downarrow}_{5,3}\+\{\mu_1\}$ \\[3pt]
$\quad$\textbf{Family} &$\!\!\!\!\!$\textbf{B} &$\!\!\!\!\!\!$\textbf{:}&
$\Phi_{5,3,5,\BBB}$ & $\!$where$\!$ &
$\BBB$ & $\!\!\!=\!\!\!$ & $\{\pi_2\}\,{\Downarrow}_{5,3}\+\varempty$ \\[3pt]
$\quad$\textbf{Family} &$\!\!\!\!\!$\textbf{C} &$\!\!\!\!\!\!$\textbf{:}&
$\Phi_{9,8,5,\CCC}$ & $\!$where$\!$ &
$\CCC$ & $\!\!\!=\!\!\!$ & $\{\pi_3\}\,{\Downarrow}_{9,8}\+\{\mu_2\}$ \\[3pt]
$\quad$\textbf{Family} &$\!\!\!\!\!$\textbf{D} &$\!\!\!\!\!\!$\textbf{:}&
$\Phi_{5,3,5,\DDD}$ & $\!$where$\!$ &
$\DDD$ & $\!\!\!=\!\!\!$ & $\{\pi_4,\pi_5\}\,{\Downarrow}_{5,3}\+\{\mu_3\}$ \\[3pt]
$\quad$\textbf{Family} &$\!\!\!\!\!$\textbf{E} &$\!\!\!\!\!\!$\textbf{:}&
$\Phi_{5,5,5,\EEE}$ & $\!$where$\!$ &
$\EEE$ & $\!\!\!=\!\!\!$ & $\{\pi_6,\pi_7,\pi_8\}\,{\Downarrow}_{5,5}\+\{\pi_6,\mu_3\} \;\cup\; \{\pi_6,\pi_7,\pi_8\}\,{\Downarrow}_{5,5}\+\{\mu_2,\mu_4,\mu_5\}$
\end{tabular}

\vspace{6pt}

\begin{figure}[ht]
  \mybox{
  \vspace{3pt}
  $$
  \begin{tikzpicture}[scale=0.2]
    \plotpermnobox{11}{9, 1, 2, 3, 4, 5, 6, 7, 11, 8, 10}
    \draw [thin] (2,1)--(1,9)--(3,2);
    \draw [thin] (4,3)--(1,9)--(5,4);
    \draw [thin] (6,5)--(1,9)--(7,6);
    \draw [thin] (8,7)--(1,9)--(10,8);
    \draw [thin] (10,8)--(9,11)--(11,10);
    \node at(6,-1){$\;\;\pi_0=\omega_5^{7,1}$};
  \end{tikzpicture}
  \qquad\quad
  \begin{tikzpicture}[scale=0.2]
    \plotpermnobox{15}{11, 1, 2, 3, 4, 5, 6, 7, 8, 9, 13, 10, 15, 12, 14}
    \draw [thin] (2,1)--(1,11)--(3,2);
    \draw [thin] (4,3)--(1,11)--(5,4);
    \draw [thin] (6,5)--(1,11)--(7,6);
    \draw [thin] (8,7)--(1,11)--(9,8);
    \draw [thin] (10,9)--(1,11)--(12,10)--(11,13)--(14,12)--(13,15)--(15,14);
    \node at(8,-1){$\;\;\pi_1=\omega_7^{9,1}$};
  \end{tikzpicture}
  \qquad\quad
  \begin{tikzpicture}[scale=0.2]
    \plotpermnobox{8}{8, 1, 2, 3, 4, 5, 6, 7}
    \draw [thin] (2,1)--(1,8)--(3,2);
    \draw [thin] (4,3)--(1,8)--(5,4);
    \draw [thin] (6,5)--(1,8)--(7,6);
    \draw [thin] (1,8)--(8,7);
    \node at(4.5,-1){$\;\;\mu_1=\psi_7$};
  \end{tikzpicture}
  \qquad\quad
  \begin{tikzpicture}[scale=0.2]
    \plotpermnobox{10}{2,9,1,3,4,5,6,7,10,8}
    \draw [thin] (4,3)--(2,9)--(5,4);
    \draw [thin] (6,5)--(2,9)--(7,6);
    \draw [thin] (8,7)--(2,9);
    \draw [thin] (1,2)--(3,1)--(2,9)--(10,8)--(9,10);
    \node at(5.5,-1){$\;\pi_2=\mathbf{2\;\!9\;\!1\;\!3\;\!4\;\!5\;\!6\;\!7\;\!10\;\!8}$};
  \end{tikzpicture}
  \vspace{-6pt}
  $$
  }
  \caption{Permutations used to define Families~A and~B}\label{figColABPerms}
\end{figure}
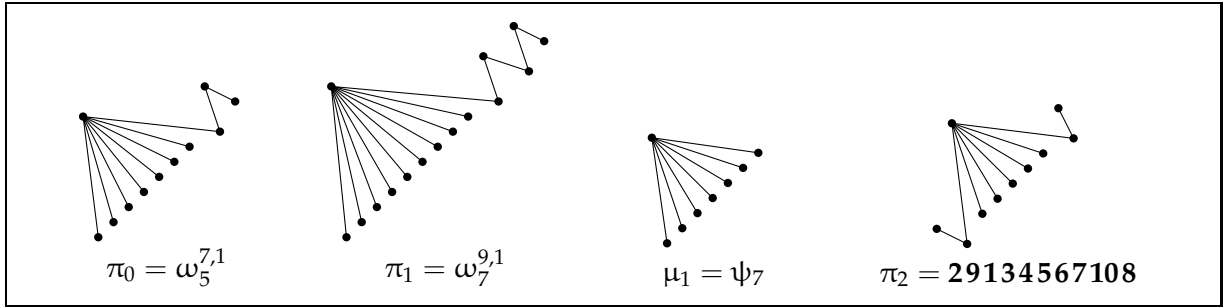
We briefly outline the calculations concerning each of these families.
For the full details, see Section~4 of~\cite{BevanLambdaCalcs}.

\vspace{6pt}
\Needspace*{4\baselineskip}
\textbf{Family A:} $\,\Phi_{5,3,7,\AAA}$
\begin{bullets}
\raggedright
\item By Lemma~\ref{lemmaQCount}, $Q^{5,3}$ is enumerated by $(1,1,2,3,5,7,\overline8)$.
\item By Lemma~\ref{lemmaSizeOfF}, for each odd $n\geqslant7$, there are 10 distinct generalised digits that enumerate sets of indecomposables in $\FFF_n^{5,3}$, ranging between 1.1 and 1.2221.
\item There are 47 distinct enumerations of sets of indecomposables in $\AAA=\{\pi_1\}\,{\Downarrow}_{5,3}\+\{\mu_1\}$, ranging between $(0^7,1)$ and $(0^7,1,2,3,4,4,3,2,1)$.
\item The indecomposables in the smallest permutation class in {Family A} are enumerated by $(\ell_n) \equiv (1,1,2,3,5,7,8,\overline9)$.
\item The indecomposables in the largest permutation class in {Family A} are enumerated by $(u_n) \equiv (1,1,2,3,5,7,8,9,11,13,15,16,15,14,13,\overline{12})$.
\item By Lemma~\ref{lemmaGRSumClosed}, $\gr(\sumclosed(\ell_n)) = \lambda_B \approx 2.356983$; $\gr(\sumclosed(u_n)) \approx 2.359320$.
\item The gap inequalities require the growth rate not to exceed $\gamma_{\max}\approx2.470979$.
\end{bullets}
\vspace{6pt}
\Needspace*{4\baselineskip}
\textbf{Family B:} $\,\Phi_{5,3,5,\BBB}$
\begin{bullets}
\raggedright
\item By Lemma~\ref{lemmaQCount}, $Q^{5,3}$ is enumerated by $(1,1,2,3,5,7,\overline8)$.
\item By Lemma~\ref{lemmaSizeOfF}, for each odd $n\geqslant5$, there are 10 distinct generalised digits that enumerate sets of indecomposables in $\FFF_n^{5,3}$, ranging between 1.1 and 1.2221.
\item There are 29 distinct enumerations of sets of indecomposables in $\BBB=\{\pi_2\}\,{\Downarrow}_{5,3}\+\varempty$, ranging between $(0)$ and
$(0^5, 1, 2, 3, 3, 1)$.
\item The indecomposables in the smallest permutation class in {Family B} are enumerated by $(\ell_n) \equiv (1, 1, 2, 3, 5, 7,\overline9)$.
\item The indecomposables in the largest permutation class in {Family B} are enumerated by $(u_n) \equiv (1, 1, 2, 3, 5, 8, 11, 13, 14, 13,\overline{12})$.
\item By Lemma~\ref{lemmaGRSumClosed}, $\gr(\sumclosed(\ell_n)) \approx 2.359304$; $\gr(\sumclosed(u_n)) \approx 2.375872$.
\item The gap inequalities require the growth rate not to exceed $\gamma_{\max}\approx2.470979$.
\end{bullets}
\vspace{6pt}
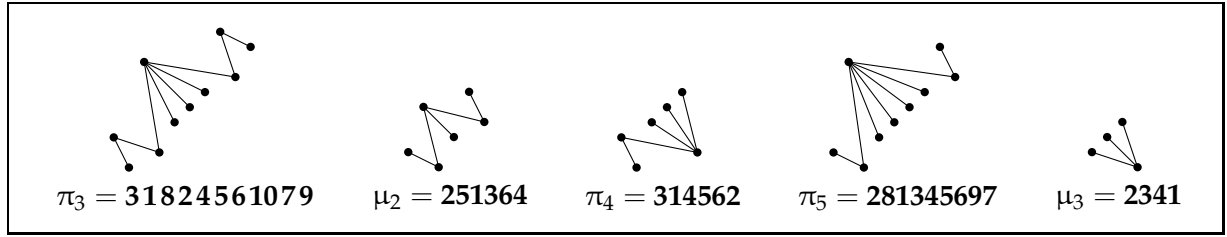
\begin{figure}[ht]
  \mybox{
  \vspace{-6pt}
  $$
  \begin{tikzpicture}[scale=0.2]
    \plotpermnobox{10}{3,1,8,2,4,5,6,10,7,9}
    \draw [thin] (5,4)--(3,8)--(7,6);
    \draw [thin] (6,5)--(3,8);
    \draw [thin] (2,1)--(1,3)--(4,2)--(3,8)--(9,7)--(8,10)--(10,9);
    \node at(5.5,-1){$\;\pi_3=\mathbf{3\;\!1\;\!8\;\!2\;\!4\;\!5\;\!6\;\!10\;\!7\;\!9}$};
  \end{tikzpicture}
  \quad
  \begin{tikzpicture}[scale=0.2]
    \plotpermnobox{6}{2,5,1,3,6,4}
    \draw [thin] (1,2)--(3,1)--(2,5)--(6,4)--(5,6);
    \draw [thin] (2,5)--(4,3);
    \node at(3.5,-1){$\;\mu_2=\mathbf{251364}$};
  \end{tikzpicture}
  \quad
  \begin{tikzpicture}[scale=0.2]
    \plotpermnobox{6}{3, 1, 4, 5, 6, 2}
    \draw [thin] (2,1)--(1,3)--(6,2)--(5,6);
    \draw [thin] (4,5)--(6,2)--(3,4);
    \node at(3.5,-1){$\;\pi_4=\mathbf{314562}$};
  \end{tikzpicture}
  \quad
  \begin{tikzpicture}[scale=0.2]
    \plotpermnobox{9}{2, 8, 1, 3, 4, 5, 6, 9, 7}
    \draw [thin] (4,3)--(2,8)--(5,4);
    \draw [thin] (6,5)--(2,8)--(7,6);
    \draw [thin] (1,2)--(3,1)--(2,8)--(9,7)--(8,9);
    \node at(5,-1){$\;\pi_5=\mathbf{281345697}$};
  \end{tikzpicture}
  \quad
  \begin{tikzpicture}[scale=0.2]
    \plotpermnobox{4}{2,3,4,1}
    \draw [thin] (1,2)--(4,1)--(3,4);
    \draw [thin] (2,3)--(4,1);
    \node at(2.5,-1){$\;\mu_3=\mathbf{2341}$};
  \end{tikzpicture}
  \vspace{-9pt}
  $$
  }
  \caption{Permutations used to define Families~C,~D and~E}\label{figColCDEPerms}
\end{figure}
\Needspace*{4\baselineskip}
\textbf{Family C:} $\,\Phi_{9,8,5,\CCC}$
\begin{bullets}
\raggedright
\item By Lemma~\ref{lemmaQCount}, $Q^{9,8}$ is enumerated by $(1, 1, 2, 3, 5, 7, 9, 11, 13, 15,\overline{17})$.
\item By Lemma~\ref{lemmaSizeOfF}, for each odd $n\geqslant5$, there are 574 distinct generalised digits that enumerate sets of indecomposables in $\FFF_n^{9,8}$, ranging between 1.1 and 1.2345677654321.
\item There are 19 distinct enumerations of sets of indecomposables in $\CCC=\{\pi_3\}\,{\Downarrow}_{9,8}\+\{\mu_2\}$, ranging between $(0^5, 1)$ and
$(0^5, 1, 3, 4, 3, 1)$.
\item The indecomposables in the smallest permutation class in {Family C} are enumerated by $(\ell_n) \equiv (1, 1, 2, 3, 5, 8, 10, 12, 14, 16,\overline{18})$.
\item The indecomposables in the largest permutation class in {Family C} are enumerated by $(u_n) \equiv (1, 1, 2, 3, 5, 8, 13, 17, 20, 22, 26, 29, 33, 36, 39, 41, 43, 44,\overline{45})$.
\Needspace*{2\baselineskip}
\item By Lemma~\ref{lemmaGRSumClosed}, $\gr(\sumclosed(\ell_n)) \approx 2.373983$; $\gr(\sumclosed(u_n)) \approx 2.389043$.
\item The gap inequalities require the growth rate not to exceed $\gamma_{\max}\approx2.786389$.
\end{bullets}
\vspace{6pt}
\Needspace*{4\baselineskip}
\textbf{Family D:} $\,\Phi_{5,3,5,\DDD}$
\begin{bullets}
\raggedright
\item By Lemma~\ref{lemmaQCount}, $Q^{5,3}$ is enumerated by $(1,1,2,3,5,7,\overline8)$.
\item By Lemma~\ref{lemmaSizeOfF}, for each odd $n\geqslant5$, there are 10 distinct generalised digits that enumerate sets of indecomposables in $\FFF_n^{5,3}$, ranging between 1.1 and 1.2221.
\item There are 37 distinct enumerations of sets of indecomposables in $\DDD=\{\pi_4,\pi_5\}\,{\Downarrow}_{5,3}\+\{\mu_3\}$, ranging between $(0^3, 1)$ and
$(0^3, 1, 2, 2, 2, 2, 1)$.
\item The indecomposables in the smallest permutation class in {Family D} are enumerated by $(\ell_n) \equiv (1, 1, 2, 4, 5, 7,\overline9)$.
\item The indecomposables in the largest permutation class in {Family D} are enumerated by $(u_n) \equiv (1, 1, 2, 4, 7, 9, 11,\overline{12})$.
\item By Lemma~\ref{lemmaGRSumClosed}, $\gr(\sumclosed(\ell_n)) \approx 2.389038$; $\gr(\sumclosed(u_n)) \approx  2.430059$.
\item The gap inequalities require the growth rate not to exceed $\gamma_{\max}\approx 2.470979$.
\end{bullets}
\vspace{6pt}
\Needspace*{4\baselineskip}
\textbf{Family E:} $\,\Phi_{5,5,5,\EEE}$
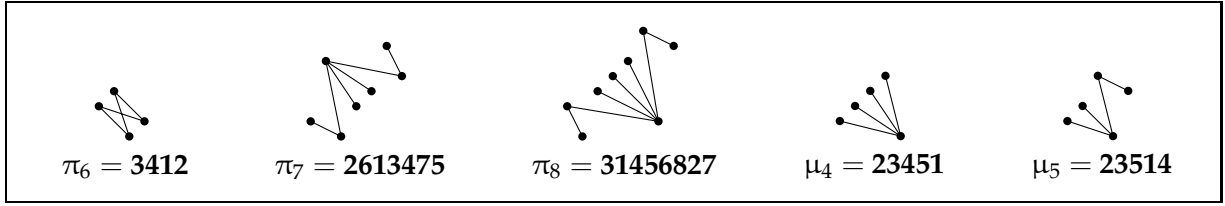
\begin{figure}[t] 
  \mybox{
  \vspace{-6pt}
  $$
  \begin{tikzpicture}[scale=0.2]
    \plotpermnobox{4}{3,4,1,2}
    \draw [thin] (1,3)--(3,1)--(2,4)--(4,2)--(1,3);
    \node at(2.5,-1){$\;\pi_6=\mathbf{3412}$};
  \end{tikzpicture}
  \qquad
  \begin{tikzpicture}[scale=0.2]
    \plotpermnobox{7}{2, 6, 1, 3, 4, 7, 5}
    \draw [thin] (4,3)--(2,6)--(5,4);
    \draw [thin] (1,2)--(3,1)--(2,6)--(7,5)--(6,7);
    \node at(4,-1){$\;\pi_7=\mathbf{2613475}$};
  \end{tikzpicture}
  \qquad
  \begin{tikzpicture}[scale=0.2]
    \plotpermnobox{8}{3, 1, 4, 5, 6, 8, 2, 7}
    \draw [thin] (3,4)--(7,2)--(4,5);
    \draw [thin] (7,2)--(5,6);
    \draw [thin] (2,1)--(1,3)--(7,2)--(6,8)--(8,7);
    \node at(4.5,-1){$\;\pi_8=\mathbf{31456827}$};
  \end{tikzpicture}
  \qquad
  \begin{tikzpicture}[scale=0.2]
    \plotpermnobox{5}{2, 3, 4, 5, 1}
    \draw [thin] (1,2)--(5,1)--(3,4);
    \draw [thin] (2,3)--(5,1)--(4,5);
    \node at(3,-1){$\;\mu_4=\mathbf{23451}$};
  \end{tikzpicture}
  \qquad
  \begin{tikzpicture}[scale=0.2]
    \plotpermnobox{5}{2, 3, 5, 1, 4}
    \draw [thin] (1,2)--(4,1)--(3,5)--(5,4);
    \draw [thin] (2,3)--(4,1);
    \node at(3,-1){$\;\mu_5=\mathbf{23514}$};
  \end{tikzpicture}
  \vspace{-9pt}
  $$
  }
  \caption{Permutations used to define Family~E}\label{figColEPerms}
\end{figure}
\begin{bullets}
\raggedright
\item By Lemma~\ref{lemmaQCount}, $Q^{5,5}$ is enumerated by $(1, 1, 2, 3, 5, 7, 9,\overline{10})$.
\item By Lemma~\ref{lemmaSizeOfF}, for each odd $n\geqslant5$, there are 26 distinct generalised digits that enumerate sets of indecomposables in $\FFF_n^{5,5}$, ranging between 1.1 and 1.234321.
\item There are 61 distinct enumerations of sets of indecomposables in $\EEE=\{\pi_6,\pi_7,\pi_8\}\,{\Downarrow}_{5,5}\+\{\pi_6,\mu_3\}$ $\,\cup\,$ $\{\pi_6,\pi_7,\pi_8\}\,{\Downarrow}_{5,5}\+\{\mu_2,\mu_4,\mu_5\}$, ranging between $(0^3, 1, 2, 1)$ and
$(0^3, 2, 3, 5, 4, 1)$.
\item The indecomposables in the smallest permutation class in {Family E} are enumerated by $(\ell_n) \equiv (1, 1, 2, 4, 7, 8, 10,\overline{11})$.
\item The indecomposables in the largest permutation class in {Family E} are enumerated by $(u_n) \equiv (1, 1, 2, 5, 8, 12, 14, 13, 14, 16, 17,\overline{18})$.
\item By Lemma~\ref{lemmaGRSumClosed}, $\gr(\sumclosed(\ell_n)) \approx 2.422247$; $\gr(\sumclosed(u_n)) \approx  2.485938 > \lambda_A$.
\item The gap inequalities require the growth rate to be at least $\gamma_{\min}\approx 2.363728$, but not to exceed $\gamma_{\max}\approx 2.489043$.
\end{bullets}
\vspace{6pt}
\Needspace*{13\baselineskip}
Here is a summary:
\begin{center}
\renewcommand{\arraystretch}{1.25}
\begin{tabular}{|c|l|l|}
  \hline
  & 
    \emph{Enumeration of smallest and largest sets of indecomposables}
  & \emph{Interval covered} \\
  \hline
  {~A~} &
    $\hspace{-\arraycolsep}\begin{array}{l}
      (1,1,2,3,5,7,8,\overline9) \\
      (1,1,2,3,5,7,8,9,11,13,15,16,15,14,13,\overline{12})
    \end{array}\hspace{-\arraycolsep}$ &
    $2.356983$ -- $2.359320$ \\
  \hline
  {B} &
    $\hspace{-\arraycolsep}\begin{array}{l}
      (1,1,2,3,5,7,\overline9) \\
      (1,1,2,3,5,8,11,13,14,13,\overline{12})
    \end{array}\hspace{-\arraycolsep}$ &
    $2.359304$ -- $2.375872$ \\
  \hline
  {C} &
    $\hspace{-\arraycolsep}\begin{array}{l}
      (1,1,2,3,5,8,10,12,14,16,\overline{18}) \\
      (1,1,2,3,5,8,13,17,20,22,26,29,33,36,39,41,43,44,\overline{45})
    \end{array}\hspace{-\arraycolsep}$ &
    $2.373983$ -- $2.389043$ \\
  \hline
  {D} &
    $\hspace{-\arraycolsep}\begin{array}{l}
      (1,1,2,4,5,7,\overline9) \\
      (1,1,2,4,7,9,11,\overline{12})
    \end{array}\hspace{-\arraycolsep}$ &
    $2.389038$ -- $2.430059$ \\
  \hline
  {E} &
    $\hspace{-\arraycolsep}\begin{array}{l}
      (1,1,2,4,7,8,10,\overline{11}) \\
      (1,1,2,5,8,12,14,13,14,16,17,\overline{18})
    \end{array}\hspace{-\arraycolsep}$ &
    $2.422247$ -- $2.485938$ \\
  \hline
\end{tabular}
\end{center}
Thus we have five intervals of growth rates that cover $[\lambda_B,\lambda_A]$.
\end{proof}

\subsubsection*{Acknowledgements}
The author is grateful to Vince Vatter for suggesting that it might be worthwhile investigating whether his conjecture 
concerning the behaviour below $\lambda_A$ may in fact be false.
He would also like to thank Vince, Robert Brignall and two referees for reading earlier drafts of this paper; their feedback
resulted in significant improvements to its presentation.

\emph{S.D.G.}

\Needspace*{7\baselineskip}
\def\UrlBreaks{\do\/}
\bibliographystyle{plain}
{\footnotesize\bibliography{mybib}}

\HIDE
{
\appendix
\section{Computer-aided calculations}

\includepdf[pages=-]{intervalCalculationsPDF.pdf}

} 

\end{document}